\begin{document}
\newtheorem{theorem}{\indent Theorem}[section]
\newtheorem{proposition}[theorem]{\indent Proposition}
\newtheorem{definition}[theorem]{\indent Definition}
\newtheorem{lemma}[theorem]{\indent Lemma}
\newtheorem{remark}[theorem]{\indent Remark}
\newtheorem{corollary}[theorem]{\indent Corollary}

\begin{center}
    {\large \bf  On the blow-up solutions for the fractional nonlinear Schr\"{o}dinger equation with combined power-type nonlinearities}
\vspace{0.5cm}\\{ Binhua Feng}\\
{\small Department of Mathematics, Northwest Normal University, Lanzhou, 730070, P.R. China }\\
\end{center}

\renewcommand{\theequation}{\arabic{section}.\arabic{equation}}
\numberwithin{equation}{section}
\footnote[0]{\hspace*{-7.4mm}
E-mail: binhuaf@163.com(Binhua Feng)\\
This work is supported by NSFC Grants (No. 11601435, No. 11401478), Gansu Provincial Natural
Science Foundation (1606RJZA010) and NWNU-LKQN-14-6.}

\renewcommand{\baselinestretch}{1.6}
\large\normalsize
\begin{abstract}
This paper is devoted to the analysis of blow-up solutions for the fractional nonlinear Schr\"{o}dinger equation with combined power-type nonlinearities
\[
i\partial_t u-(-\Delta)^su+\lambda_1|u|^{2p_1}u+\lambda_2|u|^{2p_2}u=0,
\]
where $0<p_1<p_2<\frac{2s}{N-2s}$.
Firstly, we obtain some sufficient conditions about existence of blow-up solutions, and then derive some sharp thresholds of blow-up and global existence by constructing some new estimates. Moreover, we find the sharp threshold mass
 of blow-up and global existence in the case $0<p_1<\frac{2s}{N}$ and $p_2=\frac{2s}{N}$.
Finally, we investigate the dynamical properties of blow-up solutions, including $L^2$-concentration, blow-up rate and limiting profile.

{\bf Keywords:} The fractional Schr\"{o}dinger equation; Blow-up solutions;
 Combined power-type nonlinearities; Sharp thresholds; The dynamical behavior\\
\end{abstract}
\section{Introduction}

In recent years, there has been a great deal of interest in using fractional Laplacians to model physical phenomena. By extending the
Feynman path integral from the Brownian-like to the L\'{e}vy-like quantum mechanical
paths, Laskin in \cite{n1,n2} used the theory of functionals over functional measure generated
by the L\'{e}vy stochastic process to deduce the following nonlinear fractional Schr\"{o}dinger
equation
\begin{equation}\label{e0}
i\partial_{t}u=(-\Delta)^{s}u+f(u),
\end{equation}
where $0<s<1$, $f(u)=|u|^{2p}u$. The fractional differential
operator $(-\Delta)^{s}$ is defined by $(-\Delta)^{s}u=\mathcal{F}^{-1}[|\xi|^{2s}\mathcal{F}(u)]$, where $\mathcal{F}$ and $\mathcal{F}^{-1}$
are the Fourier transform and inverse Fourier transform, respectively.

Recently, equation \eqref{e0} has attracted more and more attentions in both the physics and mathematics fields, see \cite{y1,y2,y3,y4,y5,f18jmaa,f18cma,gzjde,hong,j2,zjde,zjee}.
For the Hartree-type nonlinearity $(|x|^{-\gamma}\ast|u|^2)u$, Cho et al. in  \cite{y1}  proved existence and uniqueness of local and global solutions of \eqref{e0}. They also showed the existence of blow-up solutions in \cite{y4}. The dynamical properties of blow-up solutions have been investigated in \cite{y3,zjde}. Zhang and Zhu in \cite{j2} studied the stability and instability of standing waves. For the local nonlinearity $|u|^{2p}u$, the well-posedness and ill-posedness in the Sobolev space $H^s$ have been investigated in \cite{y5,hong}.
In \cite{bou}, Boulenger et al. have obtained a general criterion for blow-up of radial solution of \eqref{e0} with $p\geq\frac{2s}{N}$  in $\mathbb{R}^{N}$ with $N\geq2$. Although a general existence theorem for blow-up solutions of this problem has remained an open problem, it has been strongly supported by numerical evidence \cite{ksm}. The orbitally stability of standing waves for other kinds of fractional Schr\"{o}dinger equations has been studied in \cite{f18jmaa,f18cma,y2,zjee}.

In this paper, we consider the following fractional nonlinear Schr\"{o}dinger equation with combined power-type nonlinearities
\begin{equation}\label{e}
\left\{
\begin{array}{l}
i\partial_tu-(-\Delta)^su+\lambda_1|u|^{2p_1}u+\lambda_2|u|^{2p_2}u=0, \\
u(0,x) = u_0 (x),%
\end{array}%
\right.
\end{equation}
where  $u=u(t,x):[0,T^*)\times \mathbb{R}^N \rightarrow \mathbb{C}$ is a complex valued function, $0<s<1$, $\lambda_1,\lambda_2\in \mathbb{R}$, $0<p_1<p_2<\frac{2s}{N-2s}$.
This equation has Hamiltonian
\begin{align}\label{h}
E(u(t))=&\frac{1}{2}\int_{\mathbb{R}^N} |(-\Delta)^{s/2} u(t,x)|^2 dx-\frac{\lambda_1}{2p_1+2}\int_{\mathbb{R}^N}
|u(t,x)|^{2p_1+2}dx \nonumber \\&-\frac{\lambda_2}{2p_2+2}\int_{\mathbb{R}^N}
|u(t,x)|^{2p_2+2}dx.
\end{align}
But there
is no scaling invariance for this equation.

When $s=1$ and $\lambda_2=0$, equation \eqref{e} reduces the following classical nonlinear Schr\"{o}dinger equation
\begin{equation}\label{0}
i\partial_tu+\Delta u=\lambda_1 |u|^{2p_1}u.
\end{equation}
Because of important applications in physics, nonlinear Schr\"{o}dinger
equations received a great deal of attention from mathematicians in the past decades, see \cite{ca2003,ss,t} for a review.
Ginibre and Velo \cite{gv} established the local well-posedness of \eqref{0} in $H^1$(
see \cite{ca2003} for a review). When $\lambda_1<0$ and $\frac{2}{N}\leq p_1\leq \frac{2}{N-2}$, Glassey \cite{gl} proved the existence of blow-up solutions
 for the negative energy and $|x|u_0\in L^2$. Ogawa and Tsutsumi \cite{ot} proved the existence
  of blow-up solutions in radial case without the restriction $|x|u_0\in L^2$. A natural question appears for $p_1\geq\frac{2}{N}$
: can one find
some sharp criteria for  blow-up and global existence of \eqref{0}? Weinstein \cite{we} gave a crucial criterion in terms of
$L^2$-mass initial data. Also, some sharp criteria in terms of the energy of the initial data were obtained (see \cite{lz,zj}). Cazenave also mentioned this topic in their monographs \cite{ca2003}. From the view point of physics, this problem is also
pursued strongly (see \cite{ku} and the references therein). In addition, for the $L^2$-critical nonlinearity, i.e., $p_1=\frac{2}{N}$, Weinstein
 \cite{we1} studied the structure and formation of singularity of blow-up solutions with
 critical mass by the concentration compact principle: the blow-up solution is close to
 the ground state in $H^1$ up to scaling and phase parameters, and also translation in
 the non-radial case.
Applying the variational methods, Merle and Rapha\"{e}l \cite{m2} improved Weinstein's results and obtained the sharp decomposition of blow-up solutions with small super-critical mass. By this sharp decomposition and spectral properties, Merle and Rapha\"{e}l \cite{m1,m2,m3,m4} obtained a large body of breakthrough works, such as sharp blow-up rates, profiles, etc. Hmidi and Keraani \cite{ke} established the profile decomposition of bounded sequences in $H^1$ and gave a new and simple proof of some dynamical properties of
blow-up solutions in
$H^1$. These results have been generalized to other kinds of Schr\"{o}dinger equations, see \cite{fdsrwa,feect,f14jmaa,lz,zz,zjde,zjmp}.

In \cite{tao}, Tao et al. undertook a comprehensive study for the following nonlinear Schr\"{o}dinger
equation with combined power-type nonlinearities
\begin{equation}\label{taoe}
\left\{
\begin{array}{l}
i\partial_tu+\Delta u+\lambda_1|u|^{2p_1}u+\lambda_2|u|^{2p_2}u=0, \\
u(0,x) = u_0 (x),%
\end{array}%
\right.
\end{equation}
where $0<p_1<p_2\leq \frac{2}{N-2}$.
More precisely, they addressed questions
related to local and global well-posedness, finite time blow-up, and asymptotic
behaviour. Recently, in \cite{fjee}, we prove the existence of blow-up solutions and find the sharp
threshold mass of blow-up and global existence for \eqref{taoe} with $p_1= \frac{2}{N}$ and $0<p_2< \frac{2}{N}$, which is a complement to the result in \cite{tao}.

As far as we know, the existence of blow-up solutions of \eqref{e} has not been proved yet.
In particular, the dynamical properties of
blow-up solutions have not been proved even when $\lambda_1=0$.
In this paper, we will focus on the blow-up solutions of \eqref{e}. More precisely, we
are interested in sufficient conditions about the existence of blow-up solutions, sharp thresholds of blow-up and global existence, the dynamical properties of
blow-up solutions, including $L^2$-concentration, blow-up rates, and limiting profile.

To solve these problems, we mainly use the ideas from Boulenger et al. \cite{bou}
and Keraani \cite{ke}. The existence of blow-up solutions for the fractional nonlinear
Schr\"{o}dinger equation \eqref{e0} with the local nonlinearity $|u|^{2p}u$ has been investigated in \cite{bou}. The dynamical properties of blow-up solutions for the $L^2$-critical nonlinear Schr\"{o}dinger equation \eqref{0} have been discussed in \cite{ke}. In these papers, the study of blow-up solutions relies heavily on the scaling invariance of \eqref{e0} and \eqref{0}.
 Hence, the study of blow-up solutions for \eqref{e}, which has no the scaling invariance, is of particular interest.

 Firstly, we will investigate sufficient conditions about the existence of blow-up solutions for \eqref{e} by using the method of Boulenger et al.. In addition, in \cite{bou}, they use $E(u)^{s_c}M(u)^{s-s_c}$ and $\|(-\Delta)^{\frac{s}{2}}u\|^{s_c}_{L^{2}}
\| u \|^{s-s_c}$ to obtain some sharp thresholds of blow-up in finite time, where $s_c=\frac{N}{2}-\frac{s}{p}$. Note that the quantities $E(u)^{s_c}M(u)^{s-s_c}$ and $\|(-\Delta)^{\frac{s}{2}}u\|^{s_c}_{L^{2}}
\| u \|^{s-s_c}$ are scaling invariant of \eqref{e0}. But there
is no scaling invariance for equation \eqref{e}. Therefore, we must construct some new estimates to obtain some sharp thresholds of blow-up and global existence.

When $0<p_1<\frac{2s}{N}$ and $p_2=\frac{2s}{N}$, by using the scaling argument and the variational characteristic provided by the
sharp Gagliardo-Nirenberg inequality \eqref{gn}, we find the sharp threshold mass
$\|Q\|_{L^2}$ of blow-up and global existence for \eqref{e} in the following sense, where $Q$ is the ground state
solution of \eqref{ell} with $p=\frac{2s}{N}$.

(i) If $\|u_0\|_{L^2}<\|Q\|_{L^2}$, then the solution of \eqref{e} exists globally in $H^s$.

(ii) If $\|u_0\|_{L^2}\geq\|Q\|_{L^2}$, we can construct a class of initial data, and the corresponding solution $u(t)$ of \eqref{e} must blow up.

Finally, in order to overcome the loss of scaling invariance, we use the ground state solution $Q$ of \eqref{ell}
to describe the dynamical behaviour of the blow-up solutions to \eqref{e} with $0<p_1<\frac{2s}{N}$ and $p_2=\frac{2s}{N}$, including $L^2$-concentration, blow-up rates, and limiting profile. Our method can be easily applied to study the dynamical behaviour of the blow-up solutions to \eqref{e} with $\lambda_1=0$ and $p_2=\frac{2s}{N}$. Our results are new even for \eqref{e} with $\lambda_1=0$ and $p_2=\frac{2s}{N}$.

This paper is organized as follows: in Section 2, we present some preliminaries. In section 3, we will establish some sufficient conditions of the existence of blow-up solutions for \eqref{e}, and then obtain some sharp thresholds of blow-up and global existence. Moreover, we find the sharp threshold mass
 of blow-up and global existence for \eqref{e}. In section 4, we will consider some dynamical properties of
blow-up solutions of \eqref{e} with $p_2=\frac{2s}{N}$ and $0<p_1<\frac{2s}{N}$, including $L^2$-concentration, blow-up rate, and limiting profile.

\textbf{Notation.}
Throughout this paper, we use the following
notation. $C> 0$ will stand for a constant that may be different from
line to line when it does not cause any confusion.
We often abbreviate $L^q(\mathbb{R}^N)$, $\|\cdot\|_{L^q(\mathbb{R}^N)}$ and $H^s(\mathbb{R}^N)$ by $L^q$, $\|\cdot\|_{L^q}$ and $H^s$, respectively.

\section{Preliminaries}
Firstly, by a similar argument as that in \cite{y5,hong}, we can establish the local theory for the Cauchy problem \eqref{e}, see also \cite{zjee}.
\begin{proposition}
Let $u_0 \in H^{s}$ and $0<p_1<p_2<\frac{2s}{N-2s}$. Then, there exists $T = T(\|u_0
\|_{H^{s}})$ such that \eqref{e} admits a unique solution $u\in C([0,T],H^s)$. Let $[0,T^{\ast })$ be the maximal time interval on which the
solution $u$ is well-defined, if $T^{\ast }< \infty $, then $\|
u(t)\| _{H^{s}}\rightarrow \infty $ as $t\uparrow T^{\ast } $. Moreover, for all $0\leq t<T^*$, the solution
$ u(t)$ satisfies the following conservation of mass and energy
\[
\|u(t)\|_{L^2}=\|u_0\|_{L^2},
\]
\[
E(u(t))=E(u_0 ),
\]
 where $E(u(t))$ defined by \eqref{h}.
\end{proposition}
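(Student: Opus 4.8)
The plan is to run the standard Kato-type fixed-point argument on the Duhamel formulation
\[
u(t) = e^{-it(-\Delta)^s}u_0 + i\int_0^t e^{-i(t-\tau)(-\Delta)^s}\bigl(\lambda_1|u|^{2p_1}u + \lambda_2|u|^{2p_2}u\bigr)(\tau)\,d\tau,
\]
handling the two power nonlinearities separately and summing the estimates. First I would record the dispersive and Strichartz estimates for the fractional Schr\"odinger group $e^{-it(-\Delta)^s}$, $0<s<1$, as established in \cite{y5,hong}: for an admissible pair $(q,r)$ one has a bound of the form $\|e^{-it(-\Delta)^s}f\|_{L^q_tW^{\sigma,r}_x}\lesssim\|f\|_{H^s}$, where $\sigma$ is $s$ minus the derivative loss inherent to the degenerate phase $|\xi|^{2s}$ (no loss in the radial case, a fixed loss otherwise), together with the corresponding inhomogeneous estimate. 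Then I would introduce the complete metric space
\[
X=\Bigl\{u\in C([0,T],H^s)\cap L^{q_0}([0,T],W^{s,r_0}) : \|u\|_X\le M\Bigr\}
\]
for a suitable Strichartz pair $(q_0,r_0)$ and $M\sim\|u_0\|_{H^s}$, equipped with a distance coming from a lower-order Strichartz norm.

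The core step is the nonlinear estimate: using the fractional Leibniz/chain rule (Christ--Weinstein, Kato--Ponce type) together with H\"older's inequality and the Sobolev embedding $H^s\hookrightarrow L^{2p_j+2}$ (valid precisely because $p_j<\frac{2s}{N-2s}$, i.e.\ both nonlinearities are energy-subcritical), I would obtain bounds $\bigl\||u|^{2p_j}u\bigr\|_{L^{q_0'}_tW^{s,r_0'}_x}\lesssim T^{\theta_j}\|u\|_X^{2p_j+1}$ with $\theta_j>0$, and analogous estimates for differences of two solutions. Inserting these into the Strichartz estimates and choosing $T=T(\|u_0\|_{H^s})$ small enough turns the Duhamel map into a contraction on $X$, which yields a unique fixed point; a standard argument then upgrades it to $u\in C([0,T],H^s)$, gives uniqueness in $C([0,T],H^s)$, and produces the blow-up alternative (if $\limsup_{t\uparrow T^*}\|u(t)\|_{H^s}<\infty$, one could restart the local construction from times approaching $T^*$ with a uniform time step, since $T$ depends only on the $H^s$-norm, contradicting maximality). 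I expect the main obstacle to be exactly the derivative loss in the Strichartz estimates for $0<s<1$: one has to verify that the subcriticality margin $\frac{2s}{N-2s}-p_2>0$ suffices to absorb the loss, or else rerun the argument with a more careful choice of exponents --- this is the technical heart of \cite{y5,hong}.

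Finally, for the conservation laws: mass conservation is clean at the $H^s$ level, since $u\in C([0,T],H^s)$ and, from the equation, $\partial_t u\in C([0,T],H^{-s})$, so $t\mapsto\|u(t)\|_{L^2}^2$ is $C^1$ with derivative $2\,\mathrm{Im}\,\langle(-\Delta)^s u-\lambda_1|u|^{2p_1}u-\lambda_2|u|^{2p_2}u,\,u\rangle_{H^{-s},H^s}=0$, because each term in the pairing is real. Energy conservation is formally $\frac{d}{dt}E(u(t))=\mathrm{Re}\,\langle\partial_t u,\,i\partial_t u\rangle=0$, but this computation is not directly licit since $\partial_t u$ lies only in $H^{-s}$; I would justify it by a regularization argument --- approximate $u_0$ in $H^s$ by smoother data $u_0^n$ (e.g.\ in $H^{s+\sigma}$), for which the corresponding solutions $u^n$ are regular enough that the identity holds by a direct calculation, and then pass to the limit $n\to\infty$ using the continuous dependence of the flow on the initial data in $H^s$ obtained above. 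This gives $\|u(t)\|_{L^2}=\|u_0\|_{L^2}$ and $E(u(t))=E(u_0)$ for all $t\in[0,T^*)$.
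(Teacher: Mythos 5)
Your proposal is correct and follows essentially the same route the paper takes: the paper gives no proof of Proposition 2.1, deferring entirely to the Strichartz/contraction-mapping arguments of \cite{y5,hong} (and \cite{zjee}), which is precisely the fixed-point scheme with fractional Leibniz estimates and a regularization step for energy conservation that you outline. Your explicit flagging of the derivative loss in the fractional Strichartz estimates as the point that must be checked against the subcriticality margin is, if anything, more candid than the paper's one-line citation.
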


Next,
we recall a sharp Gagliardo-Nirenberg type inequality established in \cite{bou,zjee}.
\begin{lemma}
Let $N\geq 2$, $0<s<1$ and $0<p<\frac{2s}{N-2s}$. Then, for all $u\in H^s$,
\begin{equation}\label{gn}
\int_{\mathbb{R}^{N}}|u|^{2p+2}dx\leq C_{opt}\|(-\Delta)^{\frac{s}{2}}u\|^{\frac{p N}{s}}_{L^{2}}
\| u \|^{(2p+2)-\frac{pN}{s}}_{L^{2}},
\end{equation}
where the optimal constant $C_{opt}$ given by
\[
 C_{opt}=\left(\frac{2s(p+1)-p N}{p N}\right)^{\frac{Np}{2s}}\frac{2s(p+1)}{(2s(p+1)-p N)\| Q \|_{L^{2}}^{2p}},
\]
and $Q$ is a ground state solution of
\begin{equation}\label{ell}
(-\Delta)^{s}Q+Q=|Q|^{2p}Q~~~in~~\mathbb{R}^N.
\end{equation}
In particular, in the $L^2$-critical case $p=\frac{2s}{N}$, $ C_{opt}=\frac{p+1}{\| Q \|_{L^{2}}^{2p}}$.

Moreover, the solution $Q$ satisfies the following relations
\begin{equation} \label{e2.3}
 \|(-\Delta)^{\frac{s}{2}} Q\|_{L^{2}}^2=\frac{p N}{2s(p+1)-pN}
\| Q\|_{L^{2}}^2,
\end{equation}
 and
 \begin{equation} \label{e2.4}
  \int_{\mathbb{R}^{N}}|Q|^{2p+2}dx=\frac{2s(p+1)}{2s(p+1)-p N}\|Q\|_{L^{2}}^{2}.
\end{equation}
\end{lemma}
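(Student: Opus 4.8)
The plan is to realize $1/C_{opt}$ as the infimum of the Weinstein-type functional
\[
J(u) := \frac{\|(-\Delta)^{s/2}u\|_{L^2}^{pN/s}\,\|u\|_{L^2}^{(2p+2)-pN/s}}{\int_{\mathbb{R}^N}|u|^{2p+2}\,dx},\qquad u\in H^s\setminus\{0\},
\]
to show this infimum is positive and attained, and then to compute it. The inequality \eqref{gn} with \emph{some} finite constant is just the standard fractional Gagliardo--Nirenberg interpolation (itself a consequence of the Sobolev embedding $H^s\hookrightarrow L^{2N/(N-2s)}$ together with H\"older), which is meaningful precisely because the hypothesis $0<p<\frac{2s}{N-2s}$ makes both exponents $\frac{pN}{s}$ and $(2p+2)-\frac{pN}{s}$ positive; the content of the lemma is the value of $C_{opt}$ and the identities \eqref{e2.3}--\eqref{e2.4}.

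First I would show that $m:=\inf_{H^s\setminus\{0\}}J$ is positive and attained. Since $J$ is invariant under the two-parameter rescaling $u\mapsto \mu\,u(\lambda\,\cdot)$ with $\mu,\lambda>0$, a minimising sequence may be normalised in both $L^2$ and $\dot H^s$. The obstruction to compactness is the translation invariance, which I would remove by passing to symmetric-decreasing rearrangements: the fractional P\'olya--Szeg\H{o} inequality does not increase $\|(-\Delta)^{s/2}u\|_{L^2}$ and leaves $\|u\|_{L^2}$ and $\int|u|^{2p+2}$ unchanged, so one may assume the minimising sequence is radial and decreasing; the compact embedding $H^s_{\mathrm{rad}}\hookrightarrow L^{2p+2}$, valid since $2<2p+2<\frac{2N}{N-2s}$, together with the weak lower semicontinuity of $\|(-\Delta)^{s/2}\cdot\|_{L^2}$, then produces a nontrivial minimiser $u_\ast$. (Equivalently one runs the concentration--compactness principle; this is the existence statement of \cite{bou,zjee}.) Writing the Euler--Lagrange equation for $u_\ast$ and absorbing the Lagrange multipliers into a rescaling $Q(x)=\mu_0\,u_\ast(\lambda_0 x)$ puts it in the normalised form \eqref{ell}, and such a $Q$ is a ground state; conversely every ground state of \eqref{ell} minimises $J$, so $m=J(Q)$.

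It remains to evaluate $J(Q)$, which reduces to two identities for $A:=\|(-\Delta)^{s/2}Q\|_{L^2}^2$, $B:=\|Q\|_{L^2}^2$ and $C:=\int_{\mathbb{R}^N}|Q|^{2p+2}\,dx$. Pairing \eqref{ell} with $Q$ gives the Nehari identity $A+B=C$. The Pohozaev identity $(N-2s)A+NB=\frac{N}{p+1}C$ follows from the fact that $\frac{d}{d\lambda}\big|_{\lambda=1}S(Q(\lambda\,\cdot))=0$ (as $Q$ solves \eqref{ell}), where $S(u)=\frac12\|(-\Delta)^{s/2}u\|_{L^2}^2+\frac12\|u\|_{L^2}^2-\frac1{2p+2}\int_{\mathbb{R}^N}|u|^{2p+2}\,dx$, together with the scalings $\|(-\Delta)^{s/2}u(\lambda\cdot)\|_{L^2}^2=\lambda^{2s-N}\|(-\Delta)^{s/2}u\|_{L^2}^2$ and $\|u(\lambda\cdot)\|_{L^2}^2=\lambda^{-N}\|u\|_{L^2}^2$, $\int|u(\lambda\cdot)|^{2p+2}=\lambda^{-N}\int|u|^{2p+2}$. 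Solving the $2\times2$ linear system for $A$ and $C$ in terms of $B$ yields $A=\frac{pN}{2s(p+1)-pN}B$ and $C=\frac{2s(p+1)}{2s(p+1)-pN}B$, which are exactly \eqref{e2.3} and \eqref{e2.4}. Substituting these into $J(Q)=A^{pN/(2s)}B^{(p+1)-pN/(2s)}/C$ and simplifying gives
\[
J(Q)=\left(\frac{pN}{2s(p+1)-pN}\right)^{\!pN/(2s)}\frac{2s(p+1)-pN}{2s(p+1)}\,\|Q\|_{L^2}^{2p},
\]
so $C_{opt}=1/J(Q)$ is the claimed constant; in the critical case $p=\frac{2s}{N}$ the exponent $pN/(2s)$ equals $1$ and this collapses to $C_{opt}=\frac{p+1}{\|Q\|_{L^2}^{2p}}$. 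I expect the only genuine difficulty to be the existence/compactness step — one must invoke the fractional P\'olya--Szeg\H{o} inequality and still exclude vanishing and dichotomy of the rearranged minimising sequence — whereas the reduction to \eqref{ell} and the algebra leading to the explicit constant and to \eqref{e2.3}--\eqref{e2.4} are routine once a minimiser is in hand.
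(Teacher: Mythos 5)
The paper states this lemma without proof, simply recalling it from \cite{bou,zjee}, and your argument is precisely the standard Weinstein-type variational proof used in those references: minimise the scale-invariant functional $J$, obtain a minimiser by rearrangement/compactness, rescale to the normalised ground state equation \eqref{ell}, and combine the Nehari and Pohozaev identities. Your algebra is correct — the linear system $A+B=C$, $(N-2s)A+NB=\frac{N}{p+1}C$ does yield \eqref{e2.3}--\eqref{e2.4}, and substituting into $J(Q)$ reproduces the stated $C_{opt}$, including the collapse to $\frac{p+1}{\|Q\|_{L^2}^{2p}}$ when $p=\frac{2s}{N}$.
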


Next, we shall recall the profile decomposition of bounded sequences
in $H^s$, which is important
to study the dynamical properties of blow-up solutions, see \cite{zjee}.
\begin{proposition}
Let $N\geq 2$ and $0<s<1$.
 Assume that $\{v_n\}_{n=1}^{\infty}$ is a bounded sequence in $H^s$. Then, there exist a subsequence of $\{v_n\}_{n=1}^{\infty}$ (still denoted by $\{v_n\}_{n=1}^{\infty}$), a family $\{x^j\}_{j=1}^{\infty}$ of sequences in $\mathbb{R}^N$ and a sequence $\{V^j\}_{j=1}^{\infty}$ in $H^s$ such that

(i) for every $k\neq j$, $|x_n^k-x_n^j|\rightarrow +\infty$, as $n\rightarrow \infty$;

(ii) for every $l\geq 1$ and every $x\in \mathbb{R}^N$, it follows

\begin{equation}\label{3.a}
v_n(x)=\sum_{j=1}^{l}V^j(x-x_n^j)+v_n^l(x),
\end{equation}
with
\[
\limsup_{n\rightarrow \infty}\|v_n^l\|_{L^q}\rightarrow 0 ~as ~l\rightarrow \infty
\]
for every $q \in (2,\frac{2N}{N-2s})$. Moreover, we have, as $n\rightarrow \infty$,
\begin{equation}\label{3.b}
\|v_n\|_{\dot{H}^s}^2=\sum_{j=1}^{l}\|V^j\|_{\dot{H}^s}^2+\|v_n^l\|_{\dot{H}^s}^2+\circ(1),
\end{equation}
\begin{equation}\label{3.c}
\int_{\mathbb{R}^N} |\sum_{j=1}^l V^j(x -x_n^j)|^q dx=\sum_{j=1}^l\int_{\mathbb{R}^N} |V^j(x -x_n^j)|^q dx +\circ(1),
\end{equation}
where $\circ(1):=\circ_n(1)\rightarrow 0$ as $n\rightarrow \infty$.
\end{proposition}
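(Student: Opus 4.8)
The plan is to adapt the iterative profile‑extraction scheme of G\'{e}rard and Hmidi--Keraani to the fractional Sobolev space $H^s$. For any bounded sequence $\{w_n\}$ in $H^s$, introduce the set of translation‑weak limits
\[
\mathcal{V}(\{w_n\}):=\{\,V\in H^s:\ w_n(\cdot+y_n)\rightharpoonup V\ \text{in}\ H^s\ \text{along a subsequence, for some}\ (y_n)\subset\mathbb{R}^N\,\},
\]
and set $\eta(\{w_n\}):=\sup\{\|V\|_{H^s}:V\in\mathcal{V}(\{w_n\})\}$, which is finite since $\eta(\{w_n\})\leq\sup_n\|w_n\|_{H^s}$. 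First I would put $v_n^0:=v_n$ and, inductively, choose $V^{l+1}\in\mathcal{V}(\{v_n^l\})$ with $\|V^{l+1}\|_{H^s}\geq\tfrac12\eta(\{v_n^l\})$ together with a translation sequence $(x_n^{l+1})$ such that $v_n^l(\cdot+x_n^{l+1})\rightharpoonup V^{l+1}$, and then define $v_n^{l+1}(x):=v_n^l(x)-V^{l+1}(x-x_n^{l+1})$, so that $v_n^{l+1}(\cdot+x_n^{l+1})\rightharpoonup0$ in $H^s$. If at some stage $\eta(\{v_n^l\})=0$ the decomposition is finite; otherwise it produces the sequences $\{V^j\}$ and $\{x^j\}$, and a diagonal extraction yields a single subsequence along which all the weak limits below hold simultaneously, giving \eqref{3.a}.

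Next I would establish the orthogonality identities. For $j\neq k$ one has $\langle V^j(\cdot-x_n^j),V^k(\cdot-x_n^k)\rangle_{\dot{H}^s}\to0$ and $\langle V^j(\cdot-x_n^j),V^k(\cdot-x_n^k)\rangle_{L^2}\to0$ by the Riemann--Lebesgue lemma (the integrands $|\xi|^{2s}\widehat{V^j}\overline{\widehat{V^k}}$ and $\widehat{V^j}\overline{\widehat{V^k}}$ lie in $L^1$) combined with $|x_n^j-x_n^k|\to\infty$; likewise $\int|V^j(\cdot-x_n^j)|^{q-1}|V^k(\cdot-x_n^k)|\,dx\to0$ by a support‑separation argument (approximating the profiles by compactly supported functions), which gives \eqref{3.c}. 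The cross terms involving the remainder vanish because $v_n^l(\cdot+x_n^j)\rightharpoonup0$ in $H^s$ for each $j\leq l$, while $\langle\,\cdot\,,V^j\rangle_{\dot{H}^s}$ and $\langle\,\cdot\,,V^j\rangle_{L^2}$ are weakly continuous on $H^s$; expanding $\|v_n\|_{H^s}^2$ this way yields \eqref{3.b} together with its $L^2$ counterpart and, summing, $\sum_{j=1}^l\|V^j\|_{H^s}^2\leq\limsup_n\|v_n\|_{H^s}^2$ for every $l$. Hence $\sum_{j\geq1}\|V^j\|_{H^s}^2<\infty$, so $\|V^j\|_{H^s}\to0$ and therefore $\eta(\{v_n^l\})\leq2\|V^{l+1}\|_{H^s}\to0$ as $l\to\infty$. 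The pairwise divergence $|x_n^j-x_n^k|\to\infty$ (assertion (i)) I would obtain by an induction coupled to the extraction: if $x_n^{l+1}-x_n^{j_0}$ were bounded for some $j_0\leq l$, then writing $v_n^l=v_n^{j_0}-\sum_{i=j_0+1}^l V^i(\cdot-x_n^i)$ and using $v_n^{j_0}(\cdot+x_n^{j_0})\rightharpoonup0$ and $|x_n^{j_0}-x_n^i|\to\infty$ for $i>j_0$ forces $v_n^l(\cdot+x_n^{j_0})\rightharpoonup0$; translating by the bounded amount $x_n^{l+1}-x_n^{j_0}$ still gives $v_n^l(\cdot+x_n^{l+1})\rightharpoonup0$, i.e.\ $V^{l+1}=0$, contradicting $\|V^{l+1}\|_{H^s}\geq\tfrac12\eta(\{v_n^l\})>0$.

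It then remains to show $\limsup_n\|v_n^l\|_{L^q}\to0$ as $l\to\infty$ for $q\in(2,\tfrac{2N}{N-2s})$. Here I would use a quantitative ``vanishing'' estimate: covering $\mathbb{R}^N$ by unit cubes and interpolating $L^q$ between $L^2$ and the Sobolev exponent on each cube, one gets, for such $q$,
\[
\|w\|_{L^q}^q\leq C\,\|w\|_{H^s}^2\Big(\sup_{y\in\mathbb{R}^N}\|w\|_{L^2(B(y,1))}\Big)^{q-2}.
\]
On the other hand, if $\limsup_n\sup_y\|v_n^l\|_{L^2(B(y,1))}^2=a_l$, choosing near‑optimal centres $y_n$ and passing to a weak limit of $v_n^l(\cdot+y_n)$, the Rellich--Kondrachov theorem produces a $V\in\mathcal{V}(\{v_n^l\})$ with $\|V\|_{L^2(B(0,1))}^2=a_l$, whence $a_l\leq\eta(\{v_n^l\})^2\to0$ as $l\to\infty$. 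Combining the two displays with the uniform bound $\limsup_n\|v_n^l\|_{H^s}^2\leq\limsup_n\|v_n\|_{H^s}^2$ gives $\limsup_n\|v_n^l\|_{L^q}^q\leq C\,a_l^{(q-2)/2}\to0$, which is the missing assertion in (ii). I expect the main obstacle to be precisely this last point — proving the quantitative fractional vanishing estimate, since localizing the $\dot{H}^s$ seminorm onto cubes is not as clean as for $H^1$ and requires a commutator or finite‑overlap argument — rather than the largely bookkeeping‑level diagonal extraction in the first step.
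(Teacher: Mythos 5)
The paper itself gives no proof of Proposition 2.3 --- it is quoted from \cite{zjee}, whose argument (going back to G\'erard and Hmidi--Keraani) is exactly the iterative extraction you describe, so in substance you have reconstructed the intended proof and your outline is correct: the functionals $\mathcal{V}$ and $\eta$, the choice $\|V^{l+1}\|_{H^s}\geq\frac12\eta(\{v_n^l\})$, the Riemann--Lebesgue and support-separation orthogonalities giving \eqref{3.b} and \eqref{3.c}, the summability $\sum_j\|V^j\|_{H^s}^2<\infty$ forcing $\eta(\{v_n^l\})\to0$, and the induction for (i) are all as in the cited source. Two remarks on the step you single out as the main obstacle. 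First, localizing $\dot H^s$ onto unit cubes is less problematic than you fear for $0<s<1$ if you use the Gagliardo seminorm $\iint|w(x)-w(y)|^2|x-y|^{-N-2s}\,dx\,dy$: the integrals over the pairwise disjoint sets $Q_i\times Q_i$ sum to at most the global seminorm, which is equivalent to $\|w\|_{\dot H^s}^2$, and the embedding $W^{s,2}(Q_i)\hookrightarrow L^{2N/(N-2s)}(Q_i)$ has a translation-invariant constant, so no commutator or finite-overlap argument is needed. Second, there is a small quantitative slip: interpolating on each cube gives $\|w\|_{L^q(Q_i)}^q\leq C\|w\|_{W^{s,2}(Q_i)}^{q\theta}\|w\|_{L^2(Q_i)}^{q(1-\theta)}$ with $q\theta=\frac{N(q-2)}{2s}$, and extracting the factor $\bigl(\sup_y\|w\|_{L^2(B(y,1))}\bigr)^{q-2}$ while leaving a summable power $2$ requires $q\theta\leq2$, i.e.\ $q\leq2+\frac{4s}{N}$; for $2+\frac{4s}{N}<q<\frac{2N}{N-2s}$ you need one further interpolation against $\|w\|_{L^{2N/(N-2s)}}\lesssim\|w\|_{\dot H^s}$, which is bounded along the remainders, after which $\limsup_n\|v_n^l\|_{L^q}\to0$ still follows. (Several fractional references bypass the covering altogether via a refined Sobolev inequality bounding $\|u\|_{L^q}$ by $\|u\|_{\dot H^s}$ and a negative-order Besov norm, the latter being controlled directly by $\eta$; either route closes the argument.)
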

\textbf{Remark.}  In this proposition, the number of non-zero terms in the right side of \eqref{3.a} may be one, finite and infinite, which may correspond to three possibilities (compactness, dichotomy and vanishing) in the concentration compactness principle proposed by Lions. Hence, the profile decomposition may look as another equivalent description of the concentration compactness principle. However, there are two major advantages of the profile decomposition of bounded sequences in $H^s$: one is that the decomposing expression of the bounded sequence $\{v_n\}_{n=1}^{\infty}$ is given and we can inject it into our aim functionals, and the other is that the decomposition is orthogonal by (i) and norms of $\{v_n\}_{n=1}^{\infty}$ have similar decompositions, for example \eqref{3.b}. Those properties are useful in the calculus of variational methods.

In this paper, we will use the method in \cite{bou} to prove the existence of blow-up solutions to \eqref{e}. In the following, we recall some important results in \cite{bou}.
\begin{lemma}\cite{bou}  \label{Lemma3.1}\rm
 Let $N\geq1$ and suppose $\varphi:\mathbb{R}^{N}\rightarrow\mathbb{R}$ is such that $\nabla\varphi\in W^{1,\infty}(\mathbb{R}^{N})$. Then, for all $u\in H^{\frac{1}{2}}(\mathbb{R}^{N})$, it holds that
\[
 \left|\int_{\mathbb{R}^{N}}\overline{u}(x)\nabla\varphi(x)\cdot\nabla u(x)dx\right|\leq C(\||\nabla|^{\frac{1}{2}}u\|^{2}_{L^{2}}+\| u\|_{L^{2}}\||\nabla|^{\frac{1}{2}}u\|_{L^{2}}),
\]
 with some constant $C>0$ that depends only on $\|\nabla\varphi\|_{W^{1,\infty}}$ and $N$.
\end{lemma}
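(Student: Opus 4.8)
The plan is to reduce the estimate to a bilinear Fourier-side bound. Writing $I = \int_{\mathbb{R}^N}\overline{u}(x)\,\nabla\varphi(x)\cdot\nabla u(x)\,dx$, I would first integrate by parts to symmetrize: since $\nabla\varphi\cdot\nabla u = \operatorname{div}((\nabla\varphi)u) - (\Delta\varphi)u$, one gets
\[
\operatorname{Re} I = -\tfrac12\int_{\mathbb{R}^N}(\Delta\varphi)|u|^2\,dx,
\]
which is trivially bounded by $\|\Delta\varphi\|_{L^\infty}\|u\|_{L^2}^2$. The content of the lemma is therefore in the imaginary part, i.e. in the genuinely antisymmetric piece $\tfrac12\int(\overline{u}\,\nabla\varphi\cdot\nabla u - u\,\nabla\varphi\cdot\nabla\overline{u})\,dx$. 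Passing to the Fourier transform, this becomes (up to constants)
\[
\int_{\mathbb{R}^N}\int_{\mathbb{R}^N}\widehat{\partial_j\varphi}(\xi-\eta)\,(\xi_j+\eta_j)\,\widehat{u}(\eta)\,\overline{\widehat{u}(\xi)}\,d\xi\,d\eta,
\]
and the key algebraic fact is the pointwise symbol bound $|\xi+\eta|\leq |\xi-\eta| + 2\min(|\xi|,|\eta|)\le |\xi-\eta| + 2|\xi|^{1/2}|\eta|^{1/2}$. This splits the kernel into two pieces: one where the frequency difference $|\xi-\eta|$ is absorbed by an extra derivative landing on $\varphi$ (using $\nabla\varphi\in W^{1,\infty}$, hence $|\xi-\eta|\,|\widehat{\nabla\varphi}(\xi-\eta)|$ is the Fourier transform of an $L^\infty$-controlled object, more precisely of $\nabla^2\varphi$), giving the term $\|u\|_{L^2}^2$; and one where the factor $|\xi|^{1/2}|\eta|^{1/2}$ produces $\||\nabla|^{1/2}u\|_{L^2}^2$ after a Schur-test / Plancherel argument in which $\nabla\varphi$ contributes only its $L^\infty$ norm.

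Concretely, for the second piece I would write it as $\int |\xi|^{1/2}\overline{\widehat u(\xi)}\,\big(\nabla\varphi\ast\text{-type operator applied to }|\nabla|^{1/2}u\big)\widehat{\phantom{x}}(\xi)\,d\xi$ and invoke the fact that multiplication by $\nabla\varphi\in L^\infty$ is bounded on $L^2$; then Cauchy–Schwarz in $\xi$ yields $\|\nabla\varphi\|_{L^\infty}\||\nabla|^{1/2}u\|_{L^2}^2$. This is precisely where the regularity hypothesis is used in its weakest form — only boundedness of $\nabla\varphi$, not of its derivative. The cross term coming from $|\xi|^{1/2}|\eta|^{1/2}\leq \tfrac12(|\xi|+|\eta|)$ together with the low-frequency remainder produces the mixed term $\|u\|_{L^2}\||\nabla|^{1/2}u\|_{L^2}$, again after one Cauchy–Schwarz. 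Collecting the three contributions gives the claimed bound with a constant depending only on $\|\nabla\varphi\|_{W^{1,\infty}}$ and $N$.

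The main obstacle I anticipate is making the frequency decomposition $|\xi+\eta|\lesssim |\xi-\eta| + |\xi|^{1/2}|\eta|^{1/2}$ operational without losing control of the constant: one must be careful that when $|\xi-\eta|$ is large it is genuinely paired with the \emph{second} derivative of $\varphi$ (so that $\|\Delta\varphi\|_{L^\infty}$, not some fractional norm, appears), while when $|\xi-\eta|$ is comparable to or smaller than $\min(|\xi|,|\eta|)^{1/2}\max(|\xi|,|\eta|)^{1/2}$ one pairs with $\nabla\varphi$ itself. A clean way to organize this, which I would adopt, is a Littlewood–Paley decomposition $u = \sum_k P_k u$ and $\nabla\varphi = \sum_\ell P_\ell(\nabla\varphi)$: the diagonal and "high–low" interactions are handled by the $|\xi|^{1/2}|\eta|^{1/2}$ bound and the $L^\infty\to L^2$ boundedness of the paraproduct with $\nabla\varphi$, while the "high–high" interaction (both $u$-frequencies large, $\varphi$-frequency large) is where the extra derivative on $\varphi$ is extracted. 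Alternatively — and this is likely what the authors do — one cites the Kato–Ponce / fractional Leibniz commutator estimate directly and specializes it; I would present the self-contained Fourier argument as the primary route and remark that it can be shortened by invoking standard commutator lemmas.
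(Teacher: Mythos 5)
First, note that the paper does not prove this lemma at all: it is quoted verbatim from Boulenger--Himmelsbach--Lenzmann \cite{bou} (Lemma A.1 there), so there is no in-paper argument to compare against. Measured against the statement itself, your proposal has a concrete gap: both your symmetrization step and your treatment of the large-$|\xi-\eta|$ piece produce the quantity $\|u\|_{L^2}^2$. Indeed, you bound $\mathrm{Re}\,I$ by $\tfrac12\|\Delta\varphi\|_{L^\infty}\|u\|_{L^2}^2$, and pairing the factor $|\xi-\eta|$ with $\widehat{\nabla\varphi}(\xi-\eta)$ gives, via Plancherel, the operator of multiplication by $\nabla^2\varphi$, hence again a bound $\|\nabla^2\varphi\|_{L^\infty}\|u\|_{L^2}^2$. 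But $\|u\|_{L^2}^2$ does not appear on the right-hand side of the lemma and is not controlled by it: for $u$ with $\widehat{u}$ supported in $\{|\xi|\le\varepsilon\}$ one has $\||\nabla|^{1/2}u\|_{L^2}^2+\|u\|_{L^2}\||\nabla|^{1/2}u\|_{L^2}\le(\varepsilon+\varepsilon^{1/2})\|u\|_{L^2}^2\to0$. So, as organized, your argument proves only a strictly weaker inequality with an extra $\|u\|_{L^2}^2$ on the right. (That weaker bound would in fact suffice for every use the paper makes of the lemma, since $\mathcal{M}_{\varphi}[u]$ involves only $\mathrm{Im}\,I$ and is estimated by $\|u\|_{H^{1/2}}^2$; but it is not the stated estimate.) A second, smaller issue: once you replace the symbol by $|\xi|^{1/2}|\eta|^{1/2}$ and take absolute values in the kernel, ``Cauchy--Schwarz in $\xi$'' with only $\|\nabla\varphi\|_{L^\infty}$ is not enough --- a Schur-type bound for the kernel $|\widehat{\nabla\varphi}(\xi-\eta)|$ would require $\widehat{\nabla\varphi}\in L^1$ --- so the paraproduct organization you mention in passing is not optional but essential.

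The proof in \cite{bou} avoids both problems by never symmetrizing: one writes $\nabla=\frac{\nabla}{|\nabla|}|\nabla|^{1/2}\cdot|\nabla|^{1/2}$, moves one half-derivative and the Riesz transform across the pairing to get $|I|\le C\||\nabla|^{1/2}(u\,\nabla\varphi)\|_{L^2}\,\||\nabla|^{1/2}u\|_{L^2}$ by Cauchy--Schwarz and the $L^2$-boundedness of $\nabla/|\nabla|$, and then applies the commutator estimate $\|[|\nabla|^{1/2},\nabla\varphi]u\|_{L^2}\le C\|\nabla\varphi\|_{W^{1,\infty}}\|u\|_{L^2}$ to obtain $\||\nabla|^{1/2}(u\,\nabla\varphi)\|_{L^2}\le C(\||\nabla|^{1/2}u\|_{L^2}+\|u\|_{L^2})$. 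Because the factor $\||\nabla|^{1/2}u\|_{L^2}$ is extracted globally before any frequency splitting, every resulting term automatically carries at least one half-derivative of $u$, which is exactly why the right-hand side of the lemma contains no $\|u\|_{L^2}^2$. If you want to keep a self-contained Fourier-side argument, run your decomposition inside the estimate of $\||\nabla|^{1/2}(u\,\nabla\varphi)\|_{L^2}$ rather than on the original bilinear form.
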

\begin{lemma} \cite{bou} \label{Lemma3.2}\rm
 Let $N\geq1$, $s\in(0,1)$ and suppose $\varphi:\mathbb{R}^{N}\rightarrow\mathbb{R}$ with $\Delta\varphi\in W^{2,\infty}(\mathbb{R}^{N})$. Then, for all $u\in L^{2}(\mathbb{R}^{N})$, we have
\[
 \left|\int_{0}^{\infty}m^{s}\int_{\mathbb{R}^{N}}(\Delta^{2}\varphi)| u_{m}|^{2}dxdm\right|\leq C\|\Delta^{2}\varphi\|^{s}_{L^{\infty}}\|\Delta\varphi\|^{1-s}_{L^{\infty}}\| u\|^{2}_{L^{2}}.
\]
\end{lemma}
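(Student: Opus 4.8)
Recall from \cite{bou} that, for $m>0$, the function $u_m$ is defined (up to a fixed normalization constant $c_s>0$) through the resolvent of $-\Delta$, namely $\widehat{u_m}(\xi)=c_s(|\xi|^2+m)^{-1}\widehat u(\xi)$, i.e.\ $u_m=c_s(m-\Delta)^{-1}u$. In particular, if $u\in L^2(\mathbb{R}^N)$ then $u_m\in H^2(\mathbb{R}^N)$ for every $m>0$, and Plancherel's theorem together with the elementary pointwise bounds on the multipliers $(|\xi|^2+m)^{-1}$, $|\xi|^2(|\xi|^2+m)^{-2}$ and $|\xi|^4(|\xi|^2+m)^{-2}$ yield
\[
\|u_m\|_{L^2}\lesssim m^{-1}\|u\|_{L^2},\qquad \|\nabla u_m\|_{L^2}^2\lesssim m^{-1}\|u\|_{L^2}^2,\qquad \|\Delta u_m\|_{L^2}\lesssim \|u\|_{L^2},
\]
which are the only facts about $u_m$ I will use. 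Note that one cannot simply bound $\int_0^\infty m^s\|u_m\|_{L^2}^2\,dm$, since this quantity is comparable to $\|u\|_{\dot H^{s-1}}^2$ and diverges for a general $u\in L^2$; the small-$m$ part of the integral must therefore be treated by different means.

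The plan is to split the $m$-integral at a free parameter $R>0$. \emph{For $m\ge R$}, I bound $\Delta^2\varphi$ in $L^\infty$ and use the first estimate above; this leaves $\int_R^\infty m^{s-2}\,dm$, which converges because $s<1$ and produces a term $\lesssim\|\Delta^2\varphi\|_{L^\infty}\|u\|_{L^2}^2\,R^{s-1}$. \emph{For $m\le R$}, the weight $m^{-2}$ would be non-integrable at the origin, so instead I integrate by parts twice in $x$, transferring both Laplacians off $\varphi$:
\[
\int_{\mathbb{R}^N}(\Delta^2\varphi)|u_m|^2\,dx=\int_{\mathbb{R}^N}(\Delta\varphi)\,\Delta(|u_m|^2)\,dx,\qquad \Delta(|u_m|^2)=2\,\mathrm{Re}\big(\overline{u_m}\,\Delta u_m\big)+2|\nabla u_m|^2.
\]
By Cauchy--Schwarz and the three estimates above, $\big|\int_{\mathbb{R}^N}(\Delta^2\varphi)|u_m|^2\,dx\big|\lesssim\|\Delta\varphi\|_{L^\infty}\big(\|u_m\|_{L^2}\|\Delta u_m\|_{L^2}+\|\nabla u_m\|_{L^2}^2\big)\lesssim m^{-1}\|\Delta\varphi\|_{L^\infty}\|u\|_{L^2}^2$; multiplying by $m^s$ and integrating over $(0,R)$ converges because $s>0$ and gives a term $\lesssim\|\Delta\varphi\|_{L^\infty}\|u\|_{L^2}^2\,R^{s}$.

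Adding the two contributions bounds the left-hand side by $C\|u\|_{L^2}^2\big(\|\Delta^2\varphi\|_{L^\infty}R^{s-1}+\|\Delta\varphi\|_{L^\infty}R^{s}\big)$, and it remains to optimize in $R$. Balancing the two terms forces the choice $R=\|\Delta^2\varphi\|_{L^\infty}/\|\Delta\varphi\|_{L^\infty}$, for which each term equals $\|\Delta^2\varphi\|_{L^\infty}^{s}\|\Delta\varphi\|_{L^\infty}^{1-s}$, and this is exactly the asserted inequality.

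The one point requiring care is the double integration by parts in the low-$m$ regime: one must check that $|u_m|^2\in W^{2,1}(\mathbb{R}^N)$ — which follows from $u_m\in H^2$, since then $\overline{u_m}\,\Delta u_m$ and $|\nabla u_m|^2$ both lie in $L^1$ — and that the hypothesis $\Delta\varphi\in W^{2,\infty}(\mathbb{R}^N)$ makes all derivatives of $\Delta\varphi$ up to second order bounded, so that the boundary terms vanish by the standard cutoff/approximation argument; the interchange of the $dm$ and $dx$ integrations is unnecessary since the estimate on the inner $x$-integral is pointwise in $m$. Apart from this, the mechanism is entirely the split-and-optimize, and the exponents $s$ and $1-s$ arise precisely from the integrability of $m^{s-2}$ at infinity (which needs $s<1$) and of $m^{s-1}$ at the origin (which needs $s>0$).
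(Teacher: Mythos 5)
Your proof is correct; note that the paper itself gives no argument for this lemma but simply cites Boulenger--Himmelsbach--Lenzmann \cite{bou}, and your split of the $m$-integral at a threshold $R$, the direct resolvent bound $\|u_m\|_{L^2}\lesssim m^{-1}\|u\|_{L^2}$ for large $m$, the double integration by parts giving the $m^{-1}\|\Delta\varphi\|_{L^\infty}\|u\|_{L^2}^2$ bound for small $m$, and the optimization $R=\|\Delta^2\varphi\|_{L^\infty}/\|\Delta\varphi\|_{L^\infty}$ reproduce exactly the argument used there. The exponents $s$ and $1-s$ arise in your write-up for precisely the same reason as in the original source, so there is nothing further to add.
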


Let us assume that $\varphi:\mathbb{R}^{N}\rightarrow \mathbb{R}$ is a real-valued function with $\nabla\varphi\in W^{3,\infty}(\mathbb{R})$. We define the localized virial of $u=u(t,x)$ to be the quantity given by
\begin{equation} \label{y1}
  \mathcal{M}_{\varphi}[u(t)]:=2Im\int_{\mathbb{R}^{N}}\bar{u}(t)\nabla \varphi\cdot\nabla u(t)dx.
\end{equation}
 By applying Lemma 2.4, we obtain the bound
\[
\mid \mathcal{M}_{\varphi}[u(t)]\mid\leq C(\|\nabla\varphi\|_{L^{\infty}},\|\Delta\varphi\|_{L^{\infty}})\| u(t)\|_{H^{\frac{1}{2}}}^{2}.
\]
Hence the quantity $\mathcal{M}_{\varphi}[u(t)]$ is well-defined, since $u(t)\in H^{s}(\mathbb{R}^{N})$ with some $s\geq\frac{1}{2}$ by assumption.

To study the time evolution of $\mathcal{M}_{\varphi}[u(t)]$, we shall need the following auxiliary function
\begin{equation} \label{y2}
  u_{m}(t):=c_{s}\frac{1}{-\Delta+m}u(t)=c_{s}\mathcal{F}^{-1}
  \left(\frac{\hat{u}(t,\xi)}{|\xi|^{2}+m}\right),
~~~with~~~m>0,
\end{equation}
where the constant
\begin{equation*}\label{y3}
  c_{s}:=\sqrt{\frac{\sin \pi s}{\pi}}
\end{equation*}
turns out to be a convenient normalization factor. By the smoothing properties of $(-\Delta+m)^{-1}$, we clearly have that $u_{m}(t)\in H^{\alpha+2}(\mathbb{R}^{N})$ holds for any $t\in[0,T^*)$ whenever  $u(t)\in H^{\alpha}(\mathbb{R}^{N})$.

By a similar argument as that in \cite{bou}, we have the following time evolution of $\mathcal{M}_{\varphi}[u(t)]$.
\begin{lemma}  \label{Lemma3.3}\rm
 For any $t\in[0,T^*)$, we have the identity
\begin{align}\label{c4}
 \frac{d}{dt}\mathcal{M}_{\varphi}[u(t)]=&\int_{0}^{\infty}m^{s}\int_{\mathbb{R}^{N}}\{4\overline{\partial_{k}u_{m}}(\partial^{2}_{kl}\varphi)\partial_{l}u_{m}
 -(\Delta^{2}\varphi)| u_{m}|^{2}\}dxdm\nonumber\\
 &-\frac{2\lambda_1p_1}{p_1+1}\int_{\mathbb{R}^{N}}|u|^{2p_1+2}\Delta\varphi dx-
\frac{2\lambda_2p_2}{p_2+1}\int_{\mathbb{R}^{N}}|u|^{2p_2+2}\Delta\varphi dx
 \end{align}
 where $u_{m}=u_{m}(t,x)$ is defined in \eqref{y2} above.
\end{lemma}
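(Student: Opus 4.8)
The plan is to mimic the derivation of the localized virial identity from Boulenger--Lenzmann--Lieb--Schikorra, adapting it to the two-nonlinearity setting. I would start from the definition $\mathcal{M}_{\varphi}[u(t)] = 2\,\mathrm{Im}\int_{\mathbb{R}^N}\bar u(t)\nabla\varphi\cdot\nabla u(t)\,dx$ and differentiate in $t$, using the equation $i\partial_t u = (-\Delta)^s u - \lambda_1|u|^{2p_1}u-\lambda_2|u|^{2p_2}u$ to substitute for $\partial_t u$. The natural splitting is $\frac{d}{dt}\mathcal{M}_{\varphi}[u(t)] = \mathcal{I}_{\mathrm{kin}} + \mathcal{I}_{\mathrm{nl}}$, where $\mathcal{I}_{\mathrm{kin}}$ comes from the dispersive term $(-\Delta)^s u$ and $\mathcal{I}_{\mathrm{nl}}$ from the two power nonlinearities. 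Because the nonlinear terms are local, the computation of $\mathcal{I}_{\mathrm{nl}}$ is the classical one: for each $j=1,2$ an integration by parts gives a contribution proportional to $\int |u|^{2p_j+2}\Delta\varphi\,dx$, and tracking the constants yields exactly the coefficients $-\frac{2\lambda_j p_j}{p_j+1}$ stated in \eqref{c4}. This part requires no new idea beyond the standard NLS virial computation applied termwise.

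The substantive part is $\mathcal{I}_{\mathrm{kin}}$, and here I would reproduce the heat-semigroup / Balakrishnan-type representation used in \cite{bou}. The key device is the auxiliary function $u_m(t) = c_s(-\Delta+m)^{-1}u(t)$ together with the identity $(-\Delta)^s u = c_s^{-2}\int_0^\infty m^s\, (-\Delta)\, u_m\, dm$ (equivalently, one writes $(-\Delta)^s$ via $\int_0^\infty m^{s-1}\frac{-\Delta}{-\Delta+m}\,dm$ up to the normalizing constant, which is why $c_s=\sqrt{\sin(\pi s)/\pi}$ is chosen). Feeding this into the time derivative and commuting the $m$-integral with the spatial integrals, the dispersive contribution becomes $\int_0^\infty m^s \big(\text{commutator of } (-\Delta) \text{ with } \nabla\varphi\cdot\nabla\big)$ evaluated on $u_m$. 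Expanding that commutator and integrating by parts in $x$ produces precisely the two terms $4\,\overline{\partial_k u_m}(\partial^2_{kl}\varphi)\partial_l u_m$ and $-(\Delta^2\varphi)|u_m|^2$ under the $\int_0^\infty m^s\,dx\,dm$ integral, matching \eqref{c4}. The smoothing property stated after \eqref{y2} ($u_m\in H^{\alpha+2}$ when $u\in H^\alpha$) is what makes every integration by parts legitimate at the level of the regularized objects.

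The main obstacle is justification rather than formal manipulation: one must show the double integral $\int_0^\infty m^s\int_{\mathbb{R}^N}(\cdots)\,dx\,dm$ converges absolutely and that the interchange of $\frac{d}{dt}$, $\int dx$, and $\int_0^\infty m^s\,dm$ is valid. This is exactly where Lemma~\ref{Lemma3.1} and Lemma~\ref{Lemma3.2} enter: Lemma~\ref{Lemma3.2} controls $\int_0^\infty m^s\int (\Delta^2\varphi)|u_m|^2\,dx\,dm$ by $C\|\Delta^2\varphi\|_{L^\infty}^s\|\Delta\varphi\|_{L^\infty}^{1-s}\|u\|_{L^2}^2$, so the $(\Delta^2\varphi)$-piece is finite for $u\in L^2$, while the Hessian piece $\int_0^\infty m^s\int \overline{\partial_k u_m}(\partial^2_{kl}\varphi)\partial_l u_m\,dx\,dm$ is handled by the same kind of Fourier-side estimate as Lemma~\ref{Lemma3.1}, using $\|\nabla^2\varphi\|_{L^\infty}$ and the bound $\int_0^\infty m^s \|\,|\nabla| u_m\|_{L^2}^2\,dm \lesssim \|u\|_{\dot H^{s}}^2$ that follows from Plancherel and the elementary integral $\int_0^\infty m^s\frac{|\xi|^2}{(|\xi|^2+m)^2}\,dm = c\,|\xi|^{2s}$. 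Since $\nabla\varphi\in W^{3,\infty}$ by hypothesis, all the quantities $\|\nabla^2\varphi\|_{L^\infty}$, $\|\Delta^2\varphi\|_{L^\infty}$ are finite, and $u(t)\in H^s$ with $s\ge \tfrac12$ makes the left-hand side $\mathcal{M}_\varphi[u(t)]$ differentiable. I would therefore first establish these absolute-convergence bounds, then perform the formal computation on the regularized level, and finally pass to the identity \eqref{c4}; the only genuinely new bookkeeping relative to \cite{bou} is carrying the extra nonlinear term through, which is immediate since the two powers do not interact in this computation.
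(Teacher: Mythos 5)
Your proposal follows exactly the route the paper takes: the paper gives no independent proof of this lemma but simply invokes the derivation of Boulenger et al.\ \cite{bou}, which is precisely the commutator/Balakrishnan-representation computation you describe, with the two local nonlinearities handled termwise by the classical integration by parts and the convergence issues controlled via Lemma~\ref{Lemma3.1} and Lemma~\ref{Lemma3.2}. The only blemish is your displayed representation $(-\Delta)^s u = c_s^{-2}\int_0^\infty m^s(-\Delta)u_m\,dm$, which should read $c_s\int_0^\infty m^{s-1}(-\Delta)u_m\,dm$ (the weight $m^s$ appearing in \eqref{c4} only emerges after the commutator manipulation), but you record the correct $m^{s-1}$ form parenthetically and this does not affect the argument.
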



Let $\varphi:\mathbb{R}^{N}\rightarrow\mathbb{R}$ be as above. In addition, we assume that $\varphi=\varphi(r)$ is radial and satisfies
$$\varphi(r)=
\begin{cases}
 \frac{r^{2}}{2}\,\,\,\,\,\mbox{for}\,\,\,\,\,r\leq1,\\
 const.\,\,\,\,\,\mbox{for}\,\,\,\,\,r\geq10,
\end{cases}
$$
and $\varphi''(r)\leq1$ for $r\geq0$. Given $R>0$ , we define the rescaled function $\varphi_{R}:\mathbb{R}^{N}\rightarrow\mathbb{R}$ by
\[
\varphi_{R}(r):=R^{2}\varphi(\frac{r}{R}).
\]
We readily verify the inequalities
\[
1-\varphi_{R}''(r)\geq0,~~~1-\frac{\varphi_{R}'(r)}{r}\geq0,~~~N-\Delta\varphi_{R}(r)\geq0,
\]
for all $r\geq0$.

By  a similar argument as Lemma 2.2 in \cite{bou}, we obtain the following time evolution of the localized virial $\mathcal{M}_{\varphi_R}[u(t)]$ with $\varphi_R$ as above.
\begin{lemma}  \label{Lemma3.5}\rm (Localized radial virial estimate)
Let $N\geq2,s\in(\frac{1}{2},1)$ and assume in addition that $u(t)$ is a radial solution of \eqref{e}. We then have
 \begin{align}\label{vir}
 \frac{d}{dt}\mathcal{M}_{\varphi_R}[u(t)]\leq&
 4s\|(-\Delta)^{\frac{s}{2}}u(t)\|^{2}_{L^{2}}
 -\frac{2\lambda_1Np_1}{p_1+1}\|u(t)\|_{2p_1+2}^{2p_1+2}-\frac{2\lambda_2Np_2}{p_2+1}
 \|u(t)\|_{2p_2+2}^{2p_2+2}\nonumber\\
 +C(R^{-2s}+&R^{-p_1(N-1)+\varepsilon_1 s}\|(-\Delta)^{\frac{s}{2}}u(t)\|^{\frac{p_1}{s}+\varepsilon_1}_{L^{2}}+R^{-p_2(N-1)+\varepsilon_2 s}\|(-\Delta)^{\frac{s}{2}}u(t)\|^{\frac{p_2}{s}+\varepsilon_2}_{L^{2}})\nonumber\\=4p_2NE(u_{0})&-2(p_2 N-2s)\|(-\Delta)^{\frac{s}{2}}u(t)\|^{2}_{L^{2}}+\frac{2\lambda_1N(p_2-p_1)}{p_1+1}\|u(t)\|_{2p_1+2}^{2p_1+2}\nonumber\\
 +C(R^{-2s}+&R^{-p_1(N-1)+\varepsilon_1 s}\|(-\Delta)^{\frac{s}{2}}u(t)\|^{\frac{p_1}{s}+\varepsilon_1}_{L^{2}}+R^{-p_2(N-1)+\varepsilon_2 s}\|(-\Delta)^{\frac{s}{2}}u(t)\|^{\frac{p_2}{s}+\varepsilon_2}_{L^{2}}),
  \end{align}
  for any $0<\varepsilon_1<\frac{p_1(2s-1)}{s}$ and $0<\varepsilon_2<\frac{p_2(2s-1)}{s}$. Here $C=C(\| u_{0}\|_{L^{2}},N,\varepsilon_1,\varepsilon_2,s,p_1,p_2)$ is some positive constant.
 \end{lemma}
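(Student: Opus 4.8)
The plan is to start from the general virial identity \eqref{c4} of Lemma \ref{Lemma3.3} with the choice $\varphi=\varphi_R$, and to bound each of the three terms on its right-hand side. The first (kinetic) term is $\int_0^\infty m^s\int_{\mathbb{R}^N}\{4\,\overline{\partial_k u_m}(\partial^2_{kl}\varphi_R)\partial_l u_m-(\Delta^2\varphi_R)|u_m|^2\}\,dx\,dm$. I would split $\partial^2_{kl}\varphi_R$ into its contribution on the region $r\le R$, where $\varphi_R(r)=r^2/2$ so $\partial^2_{kl}\varphi_R=\delta_{kl}$ and the leading piece reproduces exactly $4\int_0^\infty m^s\int|\nabla u_m|^2\,dx\,dm=4s\|(-\Delta)^{s/2}u\|^2_{L^2}$ (using the standard Balakrishnan-type representation $\int_0^\infty m^s\|\nabla u_m\|^2_{L^2}\,dm = (s/c_s^2)\cdot(\text{const})\,\|(-\Delta)^{s/2}u\|^2_{L^2}$, as in \cite{bou}), plus an error supported on $r\ge R$; the bilaplacian term is handled by Lemma \ref{Lemma3.2}, giving a bound $C\|\Delta^2\varphi_R\|_{L^\infty}^s\|\Delta\varphi_R\|_{L^\infty}^{1-s}\|u\|^2_{L^2}\le C R^{-2s}\|u_0\|^2_{L^2}$ after noting $\|\Delta^2\varphi_R\|_{L^\infty}\lesssim R^{-2}$ and $\|\Delta\varphi_R\|_{L^\infty}\lesssim 1$. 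This produces the $4s\|(-\Delta)^{s/2}u\|^2_{L^2}+CR^{-2s}$ part of the bound.

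Next I would treat the two nonlinear terms $-\frac{2\lambda_i N p_i}{p_i+1}\int|u|^{2p_i+2}\,\Delta\varphi_R\,dx$. On $r\le R$, $\Delta\varphi_R=N$, reproducing the exact terms $-\frac{2\lambda_i N p_i}{p_i+1}\|u\|^{2p_i+2}_{2p_i+2}$; the remainder is over $r\ge R$, where $|N-\Delta\varphi_R|\le C$ is bounded and supported in $\{r\gtrsim R\}$. For this tail I would invoke the radial Strauss-type decay $|u(x)|\le C|x|^{-(N-1)/2}\|u\|^{1/2}_{L^2}\|(-\Delta)^{s/2}u\|^{1/2}_{L^2}$ (valid for $s>1/2$, $N\ge2$), so that on $\{r\ge R\}$ one gets $\int_{r\ge R}|u|^{2p_i+2}\,dx\le C R^{-p_i(N-1)}\|u\|^{p_i+1}_{L^2}\int|u|^{p_i+1}|x|^{-(N-1)p_i/\,?}\,dx$ — more carefully, peeling off $p_i$ powers via the pointwise decay and keeping two powers of $u$ in $L^2$, one arrives at a bound of the form $CR^{-p_i(N-1)}\|u\|^{\,\cdot}_{L^2}\|(-\Delta)^{s/2}u\|^{p_i/s\cdot(\text{something})}$; to make the power of the $\dot H^s$-norm come out with the small loss $\varepsilon_i$ one interpolates $\dot H^{1/2}\hookrightarrow$ between $L^2$ and $\dot H^s$ and absorbs the radial Sobolev estimate, exactly as in Lemma 2.2 of \cite{bou}. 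This yields the error terms $CR^{-p_i(N-1)+\varepsilon_i s}\|(-\Delta)^{s/2}u\|^{p_i/s+\varepsilon_i}_{L^2}$, with the admissible range $0<\varepsilon_i<\frac{p_i(2s-1)}{s}$ dictated precisely by requiring the interpolation exponent to stay below the threshold where the radial decay estimate still applies.

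Finally, for the second (equivalent) expression in \eqref{vir} I would simply substitute the energy identity: from \eqref{h} and mass conservation, $\|(-\Delta)^{s/2}u\|^2_{L^2}=2E(u_0)+\frac{\lambda_1}{p_1+1}\|u\|^{2p_1+2}_{2p_1+2}+\frac{\lambda_2}{p_2+1}\|u\|^{2p_2+2}_{2p_2+2}$. Using this to eliminate the $\|u\|^{2p_2+2}_{2p_2+2}$ term in favour of $E(u_0)$ and $\|(-\Delta)^{s/2}u\|^2_{L^2}$, and collecting coefficients, turns $4s\|(-\Delta)^{s/2}u\|^2_{L^2}-\frac{2\lambda_1 Np_1}{p_1+1}\|u\|^{2p_1+2}_{2p_1+2}-\frac{2\lambda_2 Np_2}{p_2+1}\|u\|^{2p_2+2}_{2p_2+2}$ into $4p_2 N E(u_0)-2(p_2N-2s)\|(-\Delta)^{s/2}u\|^2_{L^2}+\frac{2\lambda_1 N(p_2-p_1)}{p_1+1}\|u\|^{2p_1+2}_{2p_1+2}$, which is the claimed second line; the error terms are unchanged. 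I expect the main obstacle to be the careful bookkeeping of the tail estimate for the nonlinear terms: getting the exact powers $R^{-p_i(N-1)+\varepsilon_i s}$ and $\|(-\Delta)^{s/2}u\|^{p_i/s+\varepsilon_i}_{L^2}$ with the sharp constraint on $\varepsilon_i$ requires combining the radial Sobolev decay, a fractional interpolation inequality, and Young's inequality in just the right order, and it is here that the hypothesis $s>\frac12$ (needed for the pointwise radial bound and for Lemma \ref{Lemma3.1}) is genuinely used; the rest is a direct adaptation of \cite{bou} to the two-nonlinearity setting, which causes no new difficulty since the two power terms are handled independently.
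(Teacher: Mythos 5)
Your proposal is correct and follows essentially the same route as the paper, which in fact gives no written proof of this lemma at all but simply defers to ``a similar argument as Lemma 2.2 in \cite{bou}'': your reconstruction (virial identity \eqref{c4} with $\varphi=\varphi_R$, Balakrishnan representation for the kinetic part, Lemma \ref{Lemma3.2} for the bi-Laplacian term, radial Strauss decay plus interpolation for the nonlinear tails, each power nonlinearity handled independently) is exactly that argument adapted to two nonlinearities. Your algebraic derivation of the second line from energy and mass conservation also checks out, since eliminating $\frac{2\lambda_2Np_2}{p_2+1}\|u\|_{2p_2+2}^{2p_2+2}$ via $4Np_2E(u_0)$ produces precisely the coefficients $-2(p_2N-2s)$ and $\frac{2\lambda_1N(p_2-p_1)}{p_1+1}$ claimed in \eqref{vir}.
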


In order to deal with the $L^{2}$-critical case, we shall need the following refined version of Lemma 2.7 involving the nonnegative radial functions
\[
\psi_{1,R}=1-\varphi_{R}''(r)\geq0,\,\,\,\,\psi_{2,R}=N-\Delta\varphi_{R}(r)\geq0.
\]

\begin{lemma}  \label{Lemma4.1}(A Refined Version of Lemma 2.7)
Let $N\geq2,s\in(\frac{1}{2},1)$ and assume in addition that $u(t)$ is a radial solution of \eqref{e} for any $t\in[0,T^*)$ and $p_2=\frac{2s}{N}$. We then have
\begin{align}\label{h1}
\frac{d}{dt}\mathcal{M}_{\varphi}[u(t)]\leq &8sE[u_{0}]-4\int_{0}^{\infty}m^{s}\int_{\mathbb{R}^{N}}(\psi_{1,R}-C(\eta)\psi_{2}^{\frac{N}{2s}})|\nabla u_{m}|^{2}dxdm\nonumber \\&+\frac{2\lambda_1N(p_2-p_1)}{p_1+1}\int_{\mathbb{R}^N}
|u(t,x)|^{p_1+2}dx+CR^{-p_1(N-1)+\varepsilon_1 s}\|(-\Delta)^{\frac{s}{2}}u(t)\|^{\frac{p_1}{s}+\varepsilon_1}_{L^{2}}\nonumber \\&+\mathcal{O}((1+\eta^{-\beta})R^{-2s}+\eta(1+R^{-2}+R^{-4}))
\end{align}
 for any $\eta>0$ and $R>0$, $0<\varepsilon_1<\frac{p_1(2s-1)}{s}$, where $C(\eta)=\frac{\eta}{N+2s}$ and $\beta=\frac{2s}{N-2s}$.
 \end{lemma}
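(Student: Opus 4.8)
The plan is to run the localized virial machinery of \cite{bou} from Lemmas \ref{Lemma3.3}--\ref{Lemma3.5}, but, since we are now in the $L^2$-critical regime $p_2=\frac{2s}{N}$, to \emph{retain} two quantities that were simply discarded in the proof of Lemma \ref{Lemma3.5}: the nonnegative weighted gradient term $\int_0^\infty m^s\int_{\mathbb R^N}\psi_{1,R}|\nabla u_m|^2\,dx\,dm$ and the exterior contribution of the critical nonlinearity. Starting from the exact identity \eqref{c4} with $\varphi=\varphi_R$ (legitimate since $u(t)$ is radial, $N\ge2$, $s\in(\frac12,1)$), in the quadratic term I would write $\partial^2_{kl}\varphi_R=\delta_{kl}-(\delta_{kl}-\partial^2_{kl}\varphi_R)$: the $\delta_{kl}$-part yields, after the $m$-integration and Plancherel (using $\int_0^\infty m^s(|\xi|^2+m)^{-2}\,dm=\frac{s\pi}{\sin\pi s}|\xi|^{2s-2}$ and the normalization $c_s$), exactly $4s\|(-\Delta)^{s/2}u\|_{L^2}^2$, while the remainder, by radiality together with $1-\varphi_R''\ge0$ and $1-\varphi_R'/r\ge0$, is bounded above by $-4\int_0^\infty m^s\int\psi_{1,R}|\nabla u_m|^2\,dx\,dm$. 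The biharmonic term $-\int_0^\infty m^s\int(\Delta^2\varphi_R)|u_m|^2$ is $\mathcal O(R^{-2s})$ by Lemma \ref{Lemma3.2}, since $\|\Delta^2\varphi_R\|_{L^\infty}\lesssim R^{-2}$ and $\|\Delta\varphi_R\|_{L^\infty}\lesssim1$.

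For the nonlinear terms I would substitute $\Delta\varphi_R=N-\psi_{2,R}$. The ``$N$''-pieces are $-\frac{2\lambda_1 p_1 N}{p_1+1}\|u\|_{2p_1+2}^{2p_1+2}-\frac{2\lambda_2 p_2 N}{p_2+1}\|u\|_{2p_2+2}^{2p_2+2}$; combining these with $4s\|(-\Delta)^{s/2}u\|_{L^2}^2$ above, inserting the definition \eqref{h} of $E(u_0)$ and using conservation of energy, and then invoking the critical relation $p_2N=2s$ (which is exactly what makes the two $\|(-\Delta)^{s/2}u\|_{L^2}^2$ contributions cancel and turns $4p_2N$ into $8s$), one is left with the main term $8sE(u_0)+\frac{2\lambda_1N(p_2-p_1)}{p_1+1}\int_{\mathbb R^N}|u|^{2p_1+2}\,dx$. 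The leftover ``$\psi_{2,R}$''-pieces $\frac{2\lambda_1 p_1}{p_1+1}\int|u|^{2p_1+2}\psi_{2,R}+\frac{2\lambda_2 p_2}{p_2+1}\int|u|^{2p_2+2}\psi_{2,R}$ are both supported in $\{|x|\ge R\}$. The subcritical one is treated exactly as in Lemma \ref{Lemma3.5}: a fractional radial (Strauss-type) Sobolev inequality on $\{|x|\ge R\}$ together with interpolation gives $CR^{-p_1(N-1)+\varepsilon_1 s}\|(-\Delta)^{s/2}u(t)\|_{L^2}^{p_1/s+\varepsilon_1}$ for any $0<\varepsilon_1<\frac{p_1(2s-1)}{s}$ (here $s>\frac12$ is used).

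The crux is the $L^2$-critical piece $\frac{2\lambda_2 p_2}{p_2+1}\int|u|^{2p_2+2}\psi_{2,R}\,dx$, where $2p_2+2=2+\frac{4s}{N}$ and no power of $R$ times $\|(-\Delta)^{s/2}u\|_{L^2}$ is affordable. Here I would establish (following the $L^2$-critical part of \cite{bou}) a weighted refinement of the sharp critical Gagliardo--Nirenberg inequality \eqref{gn} adapted to the auxiliary functions $u_m$: schematically, for radial $f$, $\int\psi_{2,R}|f|^{2+4s/N}\,dx\lesssim \big(\int_0^\infty m^s\int\psi_{2,R}^{N/(2s)}|\nabla f_m|^2\,dx\,dm\big)\,\|f\|_{L^2}^{4s/N}$ plus lower-order terms carrying $\nabla\psi_{2,R}\sim R^{-1}$ and $\Delta\psi_{2,R}\sim R^{-2}$. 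Applying this with $f=u(t)$, using $\|u(t)\|_{L^2}=\|u_0\|_{L^2}$, and then using Young's inequality with exponent $\beta=\frac{2s}{N-2s}$ to isolate the small prefactor $C(\eta)=\frac{\eta}{N+2s}$, turns this piece into $4C(\eta)\int_0^\infty m^s\int\psi_{2,R}^{N/(2s)}|\nabla u_m|^2\,dx\,dm+\mathcal O\big((1+\eta^{-\beta})R^{-2s}+\eta(1+R^{-2}+R^{-4})\big)$. Merging the $+4C(\eta)\int\psi_{2,R}^{N/(2s)}|\nabla u_m|^2$ with the good term $-4\int\psi_{1,R}|\nabla u_m|^2$ from the first paragraph, and absorbing the biharmonic $\mathcal O(R^{-2s})$, gives exactly \eqref{h1}.

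I expect the main obstacle to be the construction and proof of this weighted critical Gagliardo--Nirenberg estimate and its compatibility with the $u_m$-representation: one must pass the weight $\psi_{2,R}^{\alpha}$ (with $\alpha$ tuned to the exponent $2+\frac{4s}{N}$) through the resolvents $(-\Delta+m)^{-1}$, control the resulting commutators using only $\|\nabla^k\psi_{2,R}\|_{L^\infty}\lesssim R^{-k}$ and the behaviour of $\psi_{2,R}$ near $\{\psi_{2,R}=0\}$, and keep the constant in front of $\int\psi_{2,R}^{N/(2s)}|\nabla u_m|^2$ precise enough to land on $C(\eta)=\frac{\eta}{N+2s}$ after the Young balance; extracting the correct $R$-powers ($R^{-2}$, $R^{-4}$, $R^{-2s}$) from these commutator terms is the delicate bookkeeping. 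By contrast, the $m$-integration identities, the radial sign conditions on $\varphi_R$, and the energy accounting built on $p_2N=2s$ are routine once those tools are in place.
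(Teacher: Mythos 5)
Your proposal is correct and follows essentially the same route as the paper, which in fact states this lemma without proof and defers entirely to the $L^2$-critical refined virial argument of Boulenger--Himmelsbach--Lenzmann \cite{bou}: the Hessian splitting retaining $-4\int_0^\infty m^s\int\psi_{1,R}|\nabla u_m|^2$, the energy accounting via $p_2N=2s$ producing $8sE(u_0)$ plus the subcritical remainder $\frac{2\lambda_1N(p_2-p_1)}{p_1+1}\|u\|_{2p_1+2}^{2p_1+2}$, the Strauss-type bound for the exterior subcritical piece, and the weighted critical Gagliardo--Nirenberg/Young step yielding $C(\eta)=\frac{\eta}{N+2s}$ and $\beta=\frac{2s}{N-2s}$ all match the cited argument. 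Your bookkeeping is consistent (and the exponent $|u|^{p_1+2}$ in the paper's statement is evidently a typo for $|u|^{2p_1+2}$, as your derivation confirms).
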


\section{The existence of blow-up solutions}
In this section, we will establish some sufficient conditions about the existence of blow-up solutions for \eqref{e}, and then obtain some sharp thresholds of blow-up and global existence. Moreover, we find the sharp threshold mass
 of blow-up and global existence for \eqref{e}.
Firstly, we will prove the existence of blow-up solutions of \eqref{e}.
\begin{theorem} Let $N\geq2$, $s\in(\frac{1}{2},1)$, $\lambda_2>0$, $\frac{2s}{N}<p_2\leq \frac{2s}{N-2s}$ and $p_2<2s$. Suppose that $u\in C([0,T^*),H^{2s})$ is a radial solution of \eqref{e}. Then the solution $u(t)$
blows up in finite time in the sense that $T^*<\infty$ must hold in each of the following three cases:

1) $\lambda_1>0$, $\frac{2s}{N}<p_1<p_2$, and $E(u_0)<0$;

2) $\lambda_1<0$, $0<p_1<p_2$, and $E(u_0)<0$;

3) $\lambda_1>0$, $0<p_1\leq \frac{2s}{N}$, and $E(u_0)+CM(u_0)<0$ for some suitably large constant
C.
\end{theorem}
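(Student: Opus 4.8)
The plan is to run the standard concavity/virial argument adapted to the fractional setting, using the localized radial virial estimate from Lemma~\ref{Lemma3.5} as the main engine. Define $\mathcal{M}(t):=\mathcal{M}_{\varphi_R}[u(t)]$ and recall that $|\mathcal{M}(t)|\leq C\|u(t)\|_{H^{1/2}}^2\leq C\|u(t)\|_{H^s}^2$ since $s>\frac12$; in particular $\mathcal{M}(t)$ is finite and differentiable on $[0,T^*)$, and a contradiction with $T^*=\infty$ will follow once we show $\mathcal{M}(t)$ must become $-\infty$ in finite time. The key point in all three cases is to bound the right-hand side of \eqref{vir} by a negative multiple of the kinetic energy $X(t):=\|(-\Delta)^{s/2}u(t)\|_{L^2}^2$ plus controllable error terms.

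First I would use the second form of the bound in \eqref{vir}, namely
\[
\frac{d}{dt}\mathcal{M}(t)\leq 4p_2NE(u_0)-2(p_2N-2s)X(t)+\frac{2\lambda_1N(p_2-p_1)}{p_1+1}\|u(t)\|_{2p_1+2}^{2p_1+2}+C\,\mathcal{R}(t),
\]
where $\mathcal{R}(t)=R^{-2s}+R^{-p_1(N-1)+\varepsilon_1 s}X(t)^{\frac{p_1}{2s}+\frac{\varepsilon_1}{2}}+R^{-p_2(N-1)+\varepsilon_2 s}X(t)^{\frac{p_2}{2s}+\frac{\varepsilon_2}{2}}$. Since $p_2>\frac{2s}{N}$ we have $p_2N-2s>0$, so the leading term $-2(p_2N-2s)X(t)$ is strictly negative. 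In Case 2, $\lambda_1<0$ and $p_2>p_1$ make the $\|u\|_{2p_1+2}^{2p_1+2}$ term nonpositive, so it can simply be dropped; in Case 1, $p_1>\frac{2s}{N}$, so I would instead rewrite using the first form of \eqref{vir} (or, equivalently, re-expand the combined nonlinearity against $p_1$ rather than $p_2$) so that the sign-definite pieces both become favorable — concretely, using $4p_1NE(u_0)$ as the constant and getting $-2(p_1N-2s)X(t)$ plus a $\lambda_2$-term with the right sign; since $E(u_0)<0$ this again gives $\frac{d}{dt}\mathcal{M}(t)\leq -\delta X(t)+C\mathcal{R}(t)$ for some $\delta>0$. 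The exponents $\frac{p_i}{2s}+\frac{\varepsilon_i}{2}$ appearing in $\mathcal{R}(t)$ are strictly less than $1$ precisely because $\varepsilon_i<\frac{p_i(2s-1)}{s}$ forces $\frac{p_i}{2s}+\frac{\varepsilon_i}{2}<\frac{p_i}{2s}+\frac{p_i(2s-1)}{2s}=p_i<2s\le \dots$; the relevant inequality is $\frac{p_i}{2s}+\frac{\varepsilon_i}{2}<1$, which one checks directly from $p_i<2s$ and the constraint on $\varepsilon_i$. Hence by Young's inequality each term $R^{-a_i}X(t)^{\theta_i}$ with $\theta_i<1$ is absorbed: $CR^{-a_i}X^{\theta_i}\leq \frac{\delta}{4}X+C_\delta R^{-b_i}$ for suitable $b_i>0$. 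Choosing $R$ large we arrive at
\[
\frac{d}{dt}\mathcal{M}(t)\leq -\tfrac{\delta}{2}X(t)+C_0 R^{-2s'} \qquad\text{for some }s'>0,
\]
and then, again enlarging $R$ (using $E(u_0)<0$, which makes the constant term genuinely negative after the split), we get $\frac{d}{dt}\mathcal{M}(t)\leq -\tfrac{\delta}{2}X(t)-c_1<0$ for a fixed $c_1>0$, uniformly in $t$. This already forces $\mathcal{M}(t)\to-\infty$; combined with the a priori bound $|\mathcal{M}(t)|\leq C\|u(t)\|_{H^s}^2$ and a Gagliardo--Nirenberg interpolation $\|u(t)\|_{H^{1/2}}^2\lesssim X(t)^{\theta}+\|u_0\|_{L^2}^2$ with $\theta<1$, one derives a differential inequality of the form $\frac{d}{dt}\mathcal{M}\leq -c\,|\mathcal{M}|^{1/\theta}+\cdots$ or, more simply, that $X(t)$ must blow up in finite time, contradicting $T^*=\infty$ via the blow-up alternative in Proposition~2.1. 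Case~3 is handled the same way: when $0<p_1\leq\frac{2s}{N}$ the $\lambda_1$-nonlinearity is $L^2$-subcritical, so by Gagliardo--Nirenberg $\frac{2\lambda_1N(p_2-p_1)}{p_1+1}\|u\|_{2p_1+2}^{2p_1+2}\leq \frac{\delta}{4}X(t)+C'M(u_0)$ with $C'$ depending only on $\|u_0\|_{L^2}$; absorbing this and taking $C$ in the hypothesis $E(u_0)+CM(u_0)<0$ large enough compensates exactly for $C'M(u_0)$ and the $R^{-2s'}$ error, yielding the same strictly negative upper bound for $\frac{d}{dt}\mathcal{M}$.

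The main obstacle I anticipate is the bookkeeping in the two absorption steps: one must simultaneously (a) choose $\varepsilon_1,\varepsilon_2$ in the admissible ranges so that all the $X(t)$-powers in the remainder are genuinely sublinear, (b) choose $R$ large enough — depending on $\delta$, $N$, $p_1$, $p_2$, $s$, and crucially on $\|u_0\|_{L^2}$ but not on $t$ — to beat both the kinetic error and, in Cases~1--2, to make the $R^{-2s'}$ term smaller than $|E(u_0)|$ in absolute value, and (c) in Case~3, reconcile the constant $C'$ produced by Gagliardo--Nirenberg with the hypothesized $C$. The point $p_2<2s$ is used precisely to guarantee the sublinearity in (a) (it gives room for $\varepsilon_2>0$), and the radial hypothesis together with $s>\frac12$ is what makes Lemma~\ref{Lemma3.5} available in the first place. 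Once the uniform bound $\frac{d}{dt}\mathcal{M}(t)\leq -c_1<0$ is in hand the conclusion is immediate, so essentially all the work is in the estimate itself.
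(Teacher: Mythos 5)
Your Cases 1) and 2) follow the paper's argument essentially verbatim: expand the virial estimate \eqref{vir} against $p_1$ (resp.\ $p_2$) so that the cross term $\frac{2\lambda_2N(p_1-p_2)}{p_2+1}\|u\|_{2p_2+2}^{2p_2+2}$ (resp.\ $\frac{2\lambda_1N(p_2-p_1)}{p_1+1}\|u\|_{2p_1+2}^{2p_1+2}$) has a favorable sign, absorb the sublinear-in-$X$ remainders ($\frac{p_i}{s}+\varepsilon_i<2$ thanks to $p_i<2s$) by Young, and close via $|\mathcal{M}_\varphi[u(t)]|\lesssim X(t)^{1/(2s)}$ and the ODE $\mathcal{M}'\leq -c|\mathcal{M}|^{2s}$ with $2s>1$. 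Your endgame is stated loosely (``once $\frac{d}{dt}\mathcal{M}\leq -c_1<0$ the conclusion is immediate'' is not true by itself --- a linear decay of $\mathcal{M}$ is perfectly compatible with $T^*=\infty$), but since you do invoke the nonlinear differential inequality $\mathcal{M}'\leq -c|\mathcal{M}|^{1/\theta}$ with $1/\theta=2s>1$, I read this as imprecision rather than a gap. Note also that the paper first proves the a priori lower bound $\|(-\Delta)^{s/2}u(t)\|_{L^2}\geq C>0$ from $E(u_0)<0$ and Gagliardo--Nirenberg; you should make this (or the constant $-c_1$ that substitutes for it) explicit, since it is what forces $\mathcal{M}$ to become negative in the first place.

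Case 3), however, has a genuine gap. You propose to control the unfavorable term $\frac{2\lambda_1N(p_2-p_1)}{p_1+1}\|u\|_{2p_1+2}^{2p_1+2}$ by Gagliardo--Nirenberg and to absorb it into the kinetic energy: $\|u\|_{2p_1+2}^{2p_1+2}\leq C\|u_0\|_{L^2}^{2p_1+2-\frac{Np_1}{s}}X^{\frac{Np_1}{2s}}\leq \frac{\delta}{4}X+C'$. This requires the exponent $\frac{Np_1}{2s}$ to be strictly less than $1$, but the hypothesis of Case 3) is $0<p_1\leq\frac{2s}{N}$, and at the endpoint $p_1=\frac{2s}{N}$ the bound is exactly linear in $X$ with coefficient proportional to $\|u_0\|_{L^2}^{4s/N}$; Young's inequality then buys nothing, and absorption into $-\delta X$ would demand a smallness condition on the mass that is not assumed. (Even for $p_1<\frac{2s}{N}$ strictly, your constant comes out as $C'M(u_0)^{\beta}$ with $\beta\neq 1$ rather than the $CM(u_0)$ of the statement.) The paper avoids this entirely: it sets $\theta=\frac{2s+\varepsilon}{p_2N}<1$, uses only a $\theta$-fraction of the focusing $p_2$-nonlinearity together with energy conservation to kill the $4sX$ term, and then absorbs the leftover $p_1$-term into the remaining $(1-\theta)$-fraction of the $\lambda_2\|u\|_{2p_2+2}^{2p_2+2}$ term via the pointwise Young inequality $a^{2p_1+2}\leq C(\delta)a^2+\delta a^{2p_2+2}$ (this is where $\lambda_2>0$ is used). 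That produces exactly the residual $C(\delta)M(u_0)+4Np_2\theta E(u_0)$, hence the hypothesis $E(u_0)+CM(u_0)<0$, and it works uniformly up to and including $p_1=\frac{2s}{N}$. You should replace your kinetic-energy absorption by this (or an equivalent) mechanism.
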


\begin{proof}
In what follows, we will show that the first derivative of $\mathcal{M}_{\varphi}[u(t)]$ is negative for positive times $t$. More precisely, in each of the three cases described
in Theorem 3.1, we will show that
\begin{equation}\label{301}
\frac{d}{dt}\mathcal{M}_{\varphi}[u(t)]\leq -c\|(-\Delta)^{\frac{s}{2}}u(t)\|^{2}_{L^{2}}<0
\end{equation}
for a small positive constant $c$. This implies that the solution $u(t)$
blows up in finite time. Indeed, suppose that $u(t)$ exists for all times $t\geq 0$, i.e., we can take $T^*=\infty$.

Firstly, we claim the lower bound
 \begin{equation} \label{302}
   \|(-\Delta)^{\frac{s}{2}}u(t)\|_{L^{2}}\geq C\,\,\,\mbox{for}\,\,\,t\geq0.
\end{equation}
Indeed, if this conclusion does not hold, then there exists some sequence of time $t_{k}\in[0,\infty)$ such that $\|(-\Delta)^{\frac{s}{2}}u(t_{k})\|_{L^{2}}\rightarrow0$. However, by $L^{2}$-mass conservation and the sharp Gagliardo-Nirenberg inequality \eqref{gn}, this implies that $\int_{\mathbb{R}^{N}}|u(t_k,x)|^{2p_1+2}dx\rightarrow0$ and $\int_{\mathbb{R}^{N}}|u(t_k,x)|^{2p_2+2}dx\rightarrow0$ as well. Hence, we have $E(u(t_{k}))\rightarrow0$, which is a contradiction to $E(u(t_{k}))=E(u_0)<0$. Thus, we deduce that \eqref{302} holds.

Next,
 it follows from \eqref{301} and \eqref{302} that $\frac{d}{dt}\mathcal{M}_{\varphi}[u(t)]\leq -C$ with some constant $C>0$. Integrating this bound, we conclude that $\mathcal{M}_{\varphi}[u(t)]<0$ for all $t\geq t_1$ with some time sufficiently large time $t_{1}\gg1$. Thus, integrating \eqref{301} on $[t_{1},t]$, we obtain
\begin{equation} \label{302y}
\mathcal{M}_{\varphi}[u(t)]\leq-c\int_{t_{1}}^{t}\|(-\Delta)^{\frac{s}{2}}u(\tau)\|^{2}_{L^{2}}d\tau\,\,\,\,\mbox{for all}~~~t\geq t_{1}.
\end{equation}
On the other hand, we use Lemma 2.4 and $L^{2}$-mass conservation to find that
\begin{equation} \label{302x}
\mid\mathcal{M}_{\varphi}[u(t)]\mid\leq C(\varphi_{R})(\|(-\Delta)^{\frac{s}{2}}u(t)\|^{\frac{1}{s}}_{L^{2}}
+\|(-\Delta)^{\frac{s}{2}}u(t)\|^{\frac{1}{2s}}_{L^{2}}),
\end{equation}
where we used the interpolation estimate $\| |\nabla|^{\frac{1}{2}} u\|_{L^{2}}\leq\| u\|_{L^{2}}^{1-\frac{1}{2s}}\|(-\Delta)^{\frac{s}{2}}u\|_{L^{2}}^{\frac{1}{2s}}$ for $s>\frac{1}{2}$.

So, we deduce from \eqref{302} and \eqref{302x} that
 \begin{equation}
 \begin{gathered}
   |\mathcal{M}_{\varphi}[u(t)]|\leq C(\varphi_{R})\|(-\Delta)^{\frac{s}{2}}u(t)\|^{\frac{1}{s}}_{L^{2}}.
   \end{gathered}
 \label{e3.11}
\end{equation}
This, together with \eqref{302y}, implies that
\begin{equation}
 \begin{gathered}
  \mathcal{M}_{\varphi}[u(t)]\leq -C(\varphi_{R})\int_{t_{1}}^{t} |\mathcal{M}_{\varphi}[u(\tau)]|^{2s}d\tau\,\,\,\,\,\mbox{for}\,\,\,\,t\geq t_{1}.
   \end{gathered}
 \label{e3.12}
\end{equation}
This yields $\mathcal{M}_{\varphi}[u(t)]\leq-C(\varphi_{R})| t-t_{\ast}|^{1-2s}$ for $s>\frac{1}{2}$ with some $t_{\ast}<+\infty$. Therefore, we have $\mathcal{M}_{\varphi}[u(t)]\rightarrow-\infty$ as $t\rightarrow t_{\ast}$. hence the solution $u(t)$ cannot exist for all time $t\geq0$ and consequently we must have that $T^*<+\infty$ holds.

For the remainder of the proof, we will derive \eqref{301} in each of the three cases
described in Theorem 3.1.

Case 1): $\lambda_1>0$, $\frac{2s}{N}<p_1<p_2$, and $E(u_0)<0$.

By  the conservation of energy, and our assumptions, \eqref{vir} with $\varepsilon_1$ and
$\varepsilon_2$ sufficiently small and fixed, we deduce the inequality (with $\circ_R(1)\rightarrow 0$ as $R\rightarrow \infty$ uniformly in $t$)
\begin{align}\label{c5x}
 \frac{d}{dt}\mathcal{M}_{\varphi}[u(t)]\leq&4p_1NE(u_0)-2(p_1 N-2s)\|(-\Delta)^{\frac{s}{2}}u(t)\|^{2}_{L^{2}}
 +\frac{2\lambda_2N(p_1-p_2)}{p_2+1}\|u(t)\|_{2p_2+2}^{2p_2+2}\nonumber\\
 &+\circ_R(1)(1+\|(-\Delta)^{\frac{s}{2}}u(t)\|^{\frac{p_1}{s}+\varepsilon_1}_{L^{2}}
 +\|(-\Delta)^{\frac{s}{2}}u(t)\|^{\frac{p_2}{s}+\varepsilon_2}_{L^{2}})\nonumber\\
 \leq &-(p_1 N-2s)\|(-\Delta)^{\frac{s}{2}}u(t)\|^{2}_{L^{2}}~~for~all~t\in[0,T^*),
\end{align}
provided that $R\gg 1$ is taken sufficiently large. In the last step, we use $E(u_0)<0$, Young's inequality, $\frac{p_1}{s}+\varepsilon_1<2$ and $\frac{p_2}{s}+\varepsilon_2<2$ when $\varepsilon_1$ and $\varepsilon_2$ are sufficiently small. Hence, \eqref{301} holds with $c=p_2 N-2s$.

Case 2): $\lambda_1<0$, $0<p_1<p_2$, and $E(u_0)<0$.

In this case, by a similar argument as \eqref{c5x}, we obtain
\begin{align}\label{c5y}
\frac{d}{dt}\mathcal{M}_{\varphi}[u(t)]\leq&4p_2NE(u_{0})-2(p_2 N-2s)\|(-\Delta)^{\frac{s}{2}}u(t)\|^{2}_{L^{2}}+\frac{2\lambda_1N(p_2-p_1)}
{p_1+1}\|u(t)\|_{2p_1+2}^{2p_1+2}\nonumber\\
 &+\circ_R(1)(1+\|(-\Delta)^{\frac{s}{2}}u(t)\|^{\frac{p_1}{s}+\varepsilon_1}_{L^{2}}
 +\|(-\Delta)^{\frac{s}{2}}u(t)\|^{\frac{p_2}{s}+\varepsilon_2}_{L^{2}})\nonumber\\
 \leq&-(p_2 N-2s)\|(-\Delta)^{\frac{s}{2}}u(t)\|^{2}_{L^{2}}~~for~all~t\in[0,T^*),
\end{align}
provided that $R\gg 1$ is taken sufficiently large. This implies \eqref{301} with $c=p_2 N-2s$.

Case 3): $\lambda_1>0$, $0<p_1\leq \frac{2s}{N}$, and $E(u_0)+CM(u_0)<0$ for some suitably large constant
$C$.

As $p_2>\frac{2s}{N}$, we can find a small constant $\varepsilon$ such that $p_2>\frac{2s+\varepsilon}{N}$. It is immediate that $\theta:=\frac{2s+\varepsilon}{p_2N}<1$. Therefore, by  the conservation of energy, and our assumptions, \eqref{vir} with $\varepsilon_1$ and
$\varepsilon_2$ sufficiently small and fixed, we deduce the inequality (with $\circ_R(1)\rightarrow 0$ as $R\rightarrow \infty$ uniformly in $t$)
\begin{align}\label{c5z}
 &\frac{d}{dt}\mathcal{M}_{\varphi}[u(t)]\nonumber\\\leq&
 4s\|(-\Delta)^{\frac{s}{2}}u(t)\|^{2}_{L^{2}}
 -\frac{2\lambda_1Np_1}{p_1+1}\|u(t)\|_{2p_1+2}^{2p_1+2}-\frac{2\lambda_2Np_2}{p_2+1}
 \|u(t)\|_{2p_2+2}^{2p_2+2}\nonumber\\
 &+\circ_R(1)(1+\|(-\Delta)^{\frac{s}{2}}u(t)\|^{\frac{p_1}{s}+\varepsilon_1}_{L^{2}}
 +\|(-\Delta)^{\frac{s}{2}}u(t)\|^{\frac{p_2}{s}+\varepsilon_2}_{L^{2}})\nonumber\\=& 4s\|(-\Delta)^{\frac{s}{2}}u(t)\|^{2}_{L^{2}}
 -\frac{2\lambda_1Np_1}{p_1+1}\|u(t)\|_{2p_1+2}^{2p_1+2}
 -\frac{2\lambda_2Np_2\theta}{p_2+1}\|u(t)\|_{2p_2+2}^{2p_2+2}
 \nonumber\\
 &-\frac{2\lambda_2Np_2(1-\theta)}{p_2+1}\|u(t)\|_{2p_2+2}^{2p_2+2}+\circ_R(1)(1+\|(-\Delta)^{\frac{s}{2}}u(t)\|^{\frac{p_1}{s}+\varepsilon_1}_{L^{2}}
 +\|(-\Delta)^{\frac{s}{2}}u(t)\|^{\frac{p_2}{s}+\varepsilon_2}_{L^{2}})\nonumber\\\leq& 4s\|(-\Delta)^{\frac{s}{2}}u(t)\|^{2}_{L^{2}} +2Np_2\theta\left(2E-\|(-\Delta)^{\frac{s}{2}}u(t)\|^{2}_{L^{2}}
 +\frac{\lambda_1}{p_1+1}\|u(t)\|_{2p_1+2}^{2p_1+2}\right)
 \nonumber\\
 &-\frac{2\lambda_1Np_1\theta}{p_1+1}\|u(t)\|_{2p_1+2}^{2p_1+2}
-\frac{2\lambda_2Np_2(1-\theta)}{p_2+1}\|u(t)\|_{2p_2+2}^{2p_2+2}\nonumber\\
 &+\circ_R(1)(1+\|(-\Delta)^{\frac{s}{2}}u(t)\|^{\frac{p_1}{s}+\varepsilon_1}_{L^{2}}
 +\|(-\Delta)^{\frac{s}{2}}u(t)\|^{\frac{p_2}{s}+\varepsilon_2}_{L^{2}})\nonumber\\\leq& -(Np_2\theta-2s)\|(-\Delta)^{\frac{s}{2}}u(t)\|^{2}_{L^{2}} +4Np_2\theta E+\frac{2N\theta\lambda_1(p_2-p_1)}{p_1+1}\|u(t)\|_{2p_1+2}^{2p_1+2}
 \nonumber\\
 &
-\frac{2\lambda_2Np_2(1-\theta)}{p_2+1}\|u(t)\|_{2p_2+2}^{2p_2+2}~~for~all~t\in[0,T^*),
\end{align}
provided that $R\gg 1$ is taken sufficiently large.

By Young's inequality, for any positive constants $a$ and $\delta$,
\[
a^{2p_1+2}\leq C(\delta)a^2+\delta a^{2p_2+2}.
\]
Hence,
\begin{align*}
&\frac{2N\theta\lambda_1(p_2-p_1)}{p_1+1}\|u(t)\|_{L^{2p_1+2}}^{2p_1+2}\nonumber \\\leq & C(\delta)\frac{2N\theta\lambda_1(p_2-p_1)}{p_1+1}\|u(t)\|_{L^2}^{2}+\delta \frac{2N\theta\lambda_1(p_2-p_1)}{p_1+1}\|u(t)\|_{L^{2p_2+2}}^{2p_2+2}.
\end{align*}
Choosing $\delta>0$ sufficiently small such that
\[
\delta \frac{2N\theta\lambda_1(p_2-p_1)}{p_1+1}<\frac{2\lambda_2Np_2(1-\theta)}{p_2+1},
\]
we obtain
\begin{align}\label{c5}
\frac{d}{dt}\mathcal{M}_{\varphi}[u(t)]\leq&
-(Np_2\theta-2s)\|(-\Delta)^{\frac{s}{2}}u(t)\|^{2}_{L^{2}}
\nonumber \\&+C(\delta)\frac{2N\theta\lambda_1(p_2-p_1)}{p_1+1}\|u(t)\|_{L^2}^{2} +4Np_2\theta E,
\end{align}
which, as long as
\[
C(\delta)\frac{2N\theta\lambda_1(p_2-p_1)}{p_1+1}\|u(t)\|_{L^2}^{2} +4Np_2\theta E<0,
\]
yields
\[
 \frac{d}{dt}\mathcal{M}_{\varphi}[u(t)]\leq
-(Np_2\theta-2s)\|(-\Delta)^{\frac{s}{2}}u(t)\|^{2}_{L^{2}}.
\]
This proves \eqref{301} in this case.
\end{proof}

 According to the local well-posedness theory of the fractional nonlinear Schr\"{o}dinger equation and Theorem 3.1, the solution of \eqref{e} with small initial data exists globally, and for some large initial data, the solution may blow up in finite time. Thus, whether there exists a sharp threshold of blow-up and global existence for \eqref{e} is of particular interest. On the other hand, the following problems are very important
from the view-point of physics. Under what
conditions will the condensate become unstable to collapse (blow-up)? And under what
conditions will the condensate be exist for all time (global existence)? Especially the
sharp thresholds for blow-up and global existence are pursued strongly (see \cite{ca2003,frwa,ss,we,zj,zz} and their references).
 For equation \eqref{e}, there are two nonlinearities
and there are no scaling invariance, which are the main difficulties. We obtain the following sharp conditions of blow-up and global existence for \eqref{e}
by constructing some new estimates.

\begin{theorem} Let $N\geq2$, $s\in(\frac{1}{2},1)$, $\lambda_1=\lambda_2=1$, $\frac{2s}{N}\leq p_1<p_2< \frac{2s}{N-2s}$ and $p_2<2s$. Suppose that $u\in C([0,T^*),H^{2s})$ is a radial solution of \eqref{e}. Then we have the following sharp criteria of blow-up and global existence for \eqref{e}.

1) $p_1=\frac{2s}{N}$. Let $\|u_0\|_{L^2}<\|Q_1\|_{L^2}$ and $E(u_0)<h(y_0)$. If $\|(-\Delta)^{s/2} u_0\|<y_0$, then the solution $u(t)$ of \eqref{e} exists globally; If $\|(-\Delta)^{s/2} u_0\|>y_0$, then the solution $u(t)$ of \eqref{e} blows up in finite time in the sense that $T^*<\infty$ must hold, where $Q_1$ is the ground state solution of
\eqref{ell} with $p$ replaced by $p_1$, $y_0$ and $h(y_0)$ are defined by \eqref{ee3''} and \eqref{ee3} respectively.

2) $p_1>\frac{2s}{N}$. Let $E(u_0)<\frac{p_1 N-2s}{2p_1 N}y_1^2$. If $\|(-\Delta)^{s/2} u_0\|<y_1$, then the solution $u(t)$ of \eqref{e} exists globally; If $\|(-\Delta)^{s/2} u_0\|>y_1$, then the solution $u(t)$ of \eqref{e} blows up in finite time in the sense that $T^*<\infty$ must hold, where $y_1$ is the unique positive solution of the equation $f(y)=0$ and $f(y)$ is defined in \eqref{ee288}.

\end{theorem}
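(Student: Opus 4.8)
\medskip
\noindent\textbf{Proof proposal.} The plan is to run, separately in the two ranges of $p_1$, a trapping argument for the single quantity $y(t):=\|(-\Delta)^{s/2}u(t)\|_{L^2}$, feeding its outcome into the blow-up alternative of Proposition~2.1 for the global existence halves and into a localized virial argument of Boulenger-type for the blow-up halves. Write $\alpha_i:=\frac{p_iN}{s}$, so $\alpha_1\ge 2$ and $\alpha_2>2$. I would first record that energy conservation \eqref{h} together with \eqref{gn} gives, for all $t\in[0,T^*)$,
\[
E(u_0)\ \ge\ \frac12\,y(t)^2-\frac{C_{opt,1}}{2p_1+2}\,\|u_0\|_{L^2}^{\,(2p_1+2)-\alpha_1}y(t)^{\alpha_1}-\frac{C_{opt,2}}{2p_2+2}\,\|u_0\|_{L^2}^{\,(2p_2+2)-\alpha_2}y(t)^{\alpha_2}\ =:\ \Phi(y(t)).
\]
In case 1, $\alpha_1=2$ and the $L^2$-critical value $C_{opt,1}=\frac{p_1+1}{\|Q_1\|_{L^2}^{2p_1}}$ turns the first nonlinear term into $\frac{\mu}{2}y^2$ with $\mu:=\big(\|u_0\|_{L^2}/\|Q_1\|_{L^2}\big)^{2p_1}<1$, so $\Phi(y)=h(y):=\frac{1-\mu}{2}y^2-c_2y^{\alpha_2}$ for a positive constant $c_2$; in case 2, $\alpha_1>2$ and we call $\Phi=:g$. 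In both cases $\Phi(0)=\Phi'(0)=0$, and since $\Phi'(y)/y$ is strictly decreasing (from $1$ in case 2, from $1-\mu$ in case 1) to $-\infty$, $\Phi$ increases to a unique interior maximum at the point where $\Phi'$ vanishes --- this is $y_0$ of \eqref{ee3''} in case 1 and the unique positive zero $y_1$ of $f$ in \eqref{ee288} in case 2 --- and then decreases to $-\infty$. A short computation using $\Phi'=0$ at the maximizer yields $h(y_0)$ as in \eqref{ee3}, and the comparison $\frac{p_1N-2s}{2p_1N}\,y_1^2<g(y_1)=\max g$.

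Since $\Phi(y(t))\le E(u_0)$ for all $t\in[0,T^*)$ and, by hypothesis, $E(u_0)$ is strictly below $\max\Phi$ --- which equals $h(y_0)$ in case 1 and is implied by $E(u_0)<\frac{p_1N-2s}{2p_1N}y_1^2$ in case 2 --- a continuity argument gives the dichotomy: if $\|(-\Delta)^{s/2}u_0\|_{L^2}$ lies below the maximizer, then $y(t)$ stays, with a fixed gap, below it for all $t\in[0,T^*)$; if it lies above, then $y(t)$ stays strictly above it for all $t$. In the first alternative $\sup_t y(t)<\infty$, whence by $L^2$-conservation $\sup_t\|u(t)\|_{H^s}<\infty$, and Proposition~2.1 forces $T^*=\infty$: this settles the global existence halves of 1) and 2).

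For the blow-up halves I would start from the first form of the localized radial virial estimate \eqref{vir} with $\lambda_1=\lambda_2=1$ and form the linear combination ``\eqref{vir} minus $4Np_1\cdot$\eqref{h}''. The $\|u\|_{2p_1+2}^{2p_1+2}$-terms cancel exactly, the $\|u\|_{2p_2+2}^{2p_2+2}$-terms combine with a nonpositive coefficient, and one arrives at
\[
\frac{d}{dt}\mathcal{M}_{\varphi_R}[u(t)]\ \le\ 4Np_1E(u_0)-2(Np_1-2s)\,y(t)^2-\frac{2N(p_2-p_1)}{p_2+1}\|u(t)\|_{2p_2+2}^{2p_2+2}+C\big(R^{-2s}+R^{-p_1(N-1)+\varepsilon_1 s}y^{\,p_1/s+\varepsilon_1}+R^{-p_2(N-1)+\varepsilon_2 s}y^{\,p_2/s+\varepsilon_2}\big).
\]
In case 2 ($p_1>\frac{2s}{N}$) discard the negative $L^{2p_2+2}$-term; the trapping bound $y(t)^2>y_1^2>\frac{2Np_1E(u_0)}{Np_1-2s}$, which is exactly what the energy hypothesis buys, makes the first two terms $\le -c\,y(t)^2$. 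In case 1 ($p_1=\frac{2s}{N}$) the quadratic term vanishes identically, so instead bound $\|u\|_{2p_2+2}^{2p_2+2}\ge(2p_2+2)\big(\frac{1-\mu}{2}y(t)^2-E(u_0)\big)$, using \eqref{h} and the $L^2$-critical case of \eqref{gn} for $\|u\|_{2p_1+2}^{2p_1+2}$; this produces $4Np_2E(u_0)-2N(p_2-p_1)(1-\mu)y(t)^2$, again $\le -c\,y(t)^2$ since trapping gives $y(t)^2>y_0^2>\frac{2p_2E(u_0)}{(p_2-p_1)(1-\mu)}$, an inequality one checks to be equivalent to $E(u_0)<h(y_0)$. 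Because $p_2<2s$ (so $p_i/s<2$) and $N\ge2$, one can fix $\varepsilon_1,\varepsilon_2$ small and then $R\gg1$ so the error terms are absorbed into a small fraction of $y(t)^2$; hence $\frac{d}{dt}\mathcal{M}_{\varphi_R}[u(t)]\le -c\,\|(-\Delta)^{s/2}u(t)\|_{L^2}^2<0$ on $[0,T^*)$. From here the argument is verbatim that of Theorem~3.1: combining this bound with the uniform lower bound on $y(t)$ from trapping, with $|\mathcal{M}_{\varphi_R}[u(t)]|\le C(\varphi_R)\,y(t)^{1/s}$ from Lemma~2.4 and $L^2$-conservation, and with the ensuing inequality $\mathcal{M}_{\varphi_R}[u(t)]\le -C\int_{t_1}^t|\mathcal{M}_{\varphi_R}[u(\tau)]|^{2s}\,d\tau$, which forces $\mathcal{M}_{\varphi_R}$ to reach $-\infty$ in finite time since $s>\frac12$, so that $T^*<\infty$.

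The main obstacle is precisely the loss of scaling invariance stressed in the introduction: neither the threshold $y_0$ (resp. $y_1$) nor the localized virial quantity is scale invariant, so the usual $\|u\|_{L^2}$-comparison is unavailable and one must build the one-variable profile $\Phi$ by hand and check that its maximizer is exactly the value making the combination of \eqref{vir} with the conservation laws have a strictly signed, $y^2$-dominant right-hand side. The delicate point will be the $\|u\|_{2p_1+2}^{2p_1+2}$-contribution after eliminating the energy: in case 2 it disappears for free precisely because one pairs the virial with $4Np_1E(u_0)$ rather than with $4Np_2E(u_0)$, whereas in case 1 it must be kept and matched against the optimal constant of the $L^2$-critical Gagliardo-Nirenberg inequality so that the factor $1-\mu$ remains positive --- which is where the subcriticality hypothesis $\|u_0\|_{L^2}<\|Q_1\|_{L^2}$ and the exact value of $h(y_0)$ enter the argument.
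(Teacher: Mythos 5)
Your proposal is correct and follows essentially the same route as the paper: the same auxiliary functions $h$ and $g$ built from \eqref{gn}, the same continuity/trapping argument for the global-existence halves, and the same combination of \eqref{vir} with the conserved energy yielding $\frac{d}{dt}\mathcal{M}_{\varphi_R}[u(t)]\le -c\|(-\Delta)^{s/2}u(t)\|_{L^2}^2$, concluded by the ODE argument of Theorem 3.1. The only cosmetic difference is in the blow-up half of case 1, where you eliminate the energy via the $4Np_1E$ combination and then re-substitute \eqref{h} into the $L^{2p_2+2}$ term, whereas the paper uses the $4Np_2E$ combination and applies the critical Gagliardo--Nirenberg bound to the $L^{2p_1+2}$ term; the two computations produce the identical inequality $4Np_2E(u_0)-2(Np_2-2s)(1-\mu)\|(-\Delta)^{s/2}u\|_{L^2}^2+o_R(1)$.
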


\begin{proof}

Case 1): $p_1=\frac{2s}{N}$. Applying the sharp Gagliardo-Nirenberg inequality \eqref{gn}, we have
\begin{align}\label{ee1}
E(u(t))\geq& \frac{1}{2}\|(-\Delta)^{\frac{s}{2}}u(t)\|^2_{L^{2}}-\frac{C_1}{2p_1+2}\|(-\Delta)^{\frac{s}{2}}u(t)\|^2_{L^{2}}
\| u(t) \|^{2p_1}_{L^{2}}\nonumber\\ &-\frac{C_2}{2p_2+2}\|(-\Delta)^{\frac{s}{2}}u(t)\|^{\frac{p_2 N}{s}}_{L^{2}}
\| u(t) \|^{(2p_2+2)-\frac{p_2N}{s}}_{L^{2}},
\end{align}
where $C_1$ and $C_2$ are the optimal constants in \eqref{gn} with $p_1$ and $p_2$, respectively.

Now, we define a function $h(y)$ on $ [0,\infty)$
 by
\[
h(y)=\frac{1}{2}y^2-\frac{C_1}{2p_1+2}\| u_0 \|^{2p_1}_{L^{2}}y^2-\frac{C_2}{2p_2+2}
\| u_0 \|^{(2p_2+2)-\frac{p_2N}{s}}_{L^{2}}y^{\frac{p_2 N}{s}}.
\]
Thus, \eqref{ee1} can be expressed by $E(u(t))\geq h(\|(-\Delta)^{\frac{s}{2}}u(t)\|_{L^2})$, $h(y)$ is continuous on $ [0,\infty)$ and
\begin{equation}\label{ee2}
h'(y)=\left(1-\frac{C_1}{p_1+1}\| u_0\|^{2p_1}_{L^{2}}\right)y-\frac{C_2}{2p_2+2}\frac{p_2 N}{s}
\| u_0\|^{(2p_2+2)-\frac{p_2N}{s}}_{L^{2}}y^{\frac{p_2 N}{s}-1}.
\end{equation}
By the assumption $\|u_0\|_{L^2}<\|Q_1\|_{L^2}$, equation $h'(y)=0$ has only a positive root:
\begin{equation}\label{ee3''}
y_0=\left(\frac{1-\frac{C_1}{p_1+1}\| u_0\|^{2p_1}_{L^{2}}}{\frac{p_2 N}{s}\frac{C_2}{2p_2+2}
\|u_0\|^{(2p_2+2)-\frac{p_2N}{s}}_{L^{2}}}\right)^{\frac{s}{p_2 N-2s}}.
\end{equation}
Thus, $h(y)$ is increasing on the interval $[0,y_0)$, decreasing on the interval $[y_0,\infty)$ and
\begin{equation}\label{ee3}
h_{max}=h(y_0)=\frac{Np_2-2s}{2Np_2}\left(1-\frac{C_1}{p_1+1}\| u_0\|^{2p_1}_{L^{2}}\right)y_0^2.
\end{equation}
By the conservation of energy and $E(u_0)<h(y_0)$, we have
\begin{equation}\label{ee4}
 h(\|(-\Delta)^{\frac{s}{2}}u(t)\|_{L^2})\leq E(u(t))=E(u_0)<h(y_0),~~for~all~t\in [0,T^*).
\end{equation}

Now,
we claim that if $\|(-\Delta)^{\frac{s}{2}}u_0\|_{L^2}<y_0$, then $\|(-\Delta)^{\frac{s}{2}}u(t)\|_{L^2}<y_0$, for all $t\in [0,T^*)$. This
implies the solution $u(t)$ of \eqref{e} exists globally. We prove this result by contradiction as follows. If this conclusion does not hold, by the continuity of $\|(-\Delta)^{\frac{s}{2}}u(t)\|_{L^2}$,
there exists $t_0\in [0,T^*)$ such that $\|(-\Delta)^{\frac{s}{2}}u(t_0)\|_{L^2}=y_0$. Thus, $h(\|(-\Delta)^{\frac{s}{2}}u(t_0)\|_{L^2})=h(y_0)=h_{max}$.
Moreover, taking $t=t_0$ in \eqref{ee4}, one sees that
\[
h(\|(-\Delta)^{\frac{s}{2}}u(t_0)\|_{L^2})=h(y_0)=h_{max}\leq E(u)=E(u_0)<h_{max}.
\]
Thus the contradiction has been produced, the solution $u(t)$ of \eqref{e} exists globally.

On the other
hand, if $\|(-\Delta)^{\frac{s}{2}}u_0\|_{L^2}>y_0$, by the same argument, it follows that $\|(-\Delta)^{\frac{s}{2}}u(t)\|_{L^2}>y_0$ for all $t\in [0,T^*)$.

Next, we pick $\eta>0$ sufficiently small such that
\[
E(u_0)\leq \frac{Np_2-2s}{2Np_2}\left(1-\eta-\frac{C_1}{p_1+1}\| u_0\|^{2p_1}_{L^{2}}\right)y_0^2.
\]
Thus, by the conservation of energy, \eqref{vir} and \eqref{gn}, we deduce that
 \begin{align}\label{vir123}
 \frac{d}{dt}\mathcal{M}_{\varphi_R}[u(t)]\leq&
 4s\|(-\Delta)^{\frac{s}{2}}u(t)\|^{2}_{L^{2}}
 -\frac{2Np_1}{p_1+1}\|u(t)\|_{2p_1+2}^{2p_1+2}-\frac{2Np_2}{p_2+1}
 \|u(t)\|_{2p_2+2}^{2p_2+2}\nonumber\\
 +&\circ_R(1)(1+\|(-\Delta)^{\frac{s}{2}}u(t)\|^{\frac{p_1}{s}+\varepsilon_1}_{L^{2}}
 +\|(-\Delta)^{\frac{s}{2}}u(t)\|^{\frac{p_2}{s}+\varepsilon_2}_{L^{2}})\nonumber\\=&
 4p_2NE(u_{0})-2(p_2 N-2s)\|(-\Delta)^{\frac{s}{2}}u(t)\|^{2}_{L^{2}}
 +\frac{2(Np_2-2s)}{p_1+1}\|u(t)\|_{2p_1+2}^{2p_1+2}\nonumber\\
 +&\circ_R(1)(1+\|(-\Delta)^{\frac{s}{2}}u(t)\|^{\frac{p_1}{s}+\varepsilon_1}_{L^{2}}
 +\|(-\Delta)^{\frac{s}{2}}u(t)\|^{\frac{p_2}{s}+\varepsilon_2}_{L^{2}})\nonumber \\\leq&-(\delta\eta+\circ_R(1))\|(-\Delta)^{\frac{s}{2}}u(t)\|^{2}_{L^{2}}+\circ_R(1),
  \end{align}
 with $\circ_R(1)\rightarrow 0$ as $R\rightarrow \infty$ uniformly in $t\in [0,T^*)$, where $\delta=2(p_2 N-2s)$ and we have chosen $\varepsilon_1$ and $\varepsilon_2$ small enough such that $\frac{p_1}{s}+\varepsilon_1<2$ and $\frac{p_2}{s}+\varepsilon_2<2$.
 We thus conclude
\begin{equation}\label{ee222xy}
\frac{d}{dt}\mathcal{M}_{\varphi}[u(t)]\leq -\frac{\delta\eta}{2}\|(-\Delta)^{\frac{s}{2}}u(t)\|^{2}_{L^{2}},~~for~all~t\in[0,T^*).
\end{equation}
Suppose now that $T^*=\infty$ holds. Since $\|(-\Delta)^{\frac{s}{2}}u(t)\|_{L^{2}}>y_0>0$ for all $t\geq0$, we see from \eqref{ee222xy} that $\mathcal{M}_{\varphi}[u(t)]<0$ for all $t\geq t_1$ with some sufficiently large time $t_1\gg 1$.
Hence, by integrating on $[t_1,t]$, we obtain
\begin{equation}\label{ee233}
\mathcal{M}_{\varphi}[u(t)]\leq -\frac{\delta\eta}{2}\int_{t_1}^t\|(-\Delta)^{\frac{s}{2}}u(s)\|^{2}_{L^{2}}ds\leq0,~~for~all~t\geq t_1.
\end{equation}
By following exactly the steps after \eqref{302y} above, we deduce that $u(t)$ cannot exist for all times $t\geq 0$ and consequently we must have that $T^*<\infty$ holds.

Case 2):
We define a function $g(y)$ on $ [0,\infty)$ by
\[
g(y)=\frac{1}{2}y^2-\frac{C_1}{2p_1+2}\|u_0\|^{(2p_1+2)-\frac{p_1N}{s}}_{L^{2}}y^{\frac{p_1 N}{s}}-\frac{C_2}{2p_2+2}
\| u_0 \|^{(2p_2+2)-\frac{p_2N}{s}}_{L^{2}}y^{\frac{p_2 N}{s}},~~y\in [0,\infty).
\]
Thus, \eqref{ee1} can be expressed by $E(u(t))\geq g(\|(-\Delta)^{\frac{s}{2}}u(t)\|_{L^2})$, $g(y)$
is continuous on $ [0,\infty)$ and
\begin{align}\label{ee288}
g'(y)&=\left(1-\frac{C_1}{2p_1+2}\frac{p_1 N}{s}\|u_0\|^{(2p_1+2)-\frac{p_1N}{s}}_{L^{2}}y^{\frac{p_1 N}{s}-2}-\frac{C_2}{2p_2+2}\frac{p_2 N}{s}
\| u_0 \|^{(2p_2+2)-\frac{p_2N}{s}}_{L^{2}}y^{\frac{p_2 N}{s}-2}\right)y\nonumber\\&:=f(y)y.
\end{align}

For the equation $f(y)=0$, there is a unique positive solution $y_1$. Indeed, by assumption $\frac{2s}{N}<p_1<p_2<\frac{2s}{N-2s}$, for $y>0$, we have
\begin{align}\label{ee2}
f'(y)=&-\frac{C_1}{2p_1+2}\frac{p_1 N}{s}(\frac{p_1 N}{s}-2)\|u_0\|^{(2p_1+2)-\frac{p_1N}{s}}_{L^{2}}y^{\frac{p_1 N}{s}-3}\nonumber\\&-\frac{C_2}{2p_2+2}\frac{p_2 N}{s}(\frac{p_2 N}{s}-2)
\| u_0\|^{(2p_2+2)-\frac{p_2N}{s}}_{L^{2}}y^{\frac{p_2 N}{s}-3}<0,
\end{align}
which implies that $f(y)$ is decreasing on $[0,\infty)$. Due to $f(0)=1$,
there exists a unique $y_1>0$ such that $f(y_1)=0$. This implies
\[
g(y_1)=\left(\frac{1}{2}-\frac{s}{p_1 N}\right)y_1^2+\frac{C_2p_2}{2p_2+2}\left(\frac{1}{p_1}-\frac{1}{p_2}\right)
\| u_0\|^{(2p_2+2)-\frac{p_2N}{s}}_{L^{2}}y_1^{\frac{p_2 N}{s}}.
\]

On the other hand, we deduce from the conservation of energy and the assumption $E(u_0)<\frac{p_1 N-2s}{2p_1 N}y_1^2$ that
\begin{align}\label{ee2}
&g(\|(-\Delta)^{\frac{s}{2}}u(t)\|_{L^2})\leq E(u(t))=E(u_0)\nonumber\\\leq & \left(\frac{1}{2}-\frac{s}{p_1 N}\right)y_1^2+C_2p_2\left(\frac{1}{p_1}-\frac{N}{s}\right)
\| u_0\|^{(2p_2+2)-\frac{p_2N}{s}}_{L^{2}}y_1^{\frac{p_2 N}{s}}=g(y_1).
\end{align}
By the same argument as Case 1), we can obtain that if $\|(-\Delta)^{\frac{s}{2}}u_0\|_{L^{2}}<y_1$, then for all $t\in [0,T^*)$,  $\|(-\Delta)^{\frac{s}{2}}u(t)\|_{L^{2}}<y_1$, which implies the solution $u(t)$ of \eqref{e} exists globally.

And if $\|(-\Delta)^{\frac{s}{2}}u_0\|_{L^{2}}>y_1$, by the same way, it follows that $\|(-\Delta)^{\frac{s}{2}}u(t)\|_{L^{2}}>y_1$ for all $t\in [0,T^*)$.

Next, we pick $\eta>0$ sufficiently small such that
\[
E(u_0)\leq (1-\eta)\frac{p_1 N-2s}{2p_1 N}y_1^2<(1-\eta)\frac{p_1 N-2s}{2p_1 N}\|(-\Delta)^{\frac{s}{2}}u(t)\|_{L^{2}}^2~~ for~ all~t\in [0,T^*).
\]
Inserting this bound into the differential inequality \eqref{vir}, we obtain
\begin{align}\label{c5}
 \frac{d}{dt}\mathcal{M}_{\varphi}[u(t)]\leq &4p_1NE(u_{0})-2(p_1 N-2s)\|(-\Delta)^{\frac{s}{2}}u(t)\|^{2}_{L^{2}}
 +\frac{2N(p_1-p_2)}{p_2+1}\|u(t)\|_{2p_2+2}^{2p_1+2}\nonumber\\
 +&\circ_R(1)(1+\|(-\Delta)^{\frac{s}{2}}u(t)\|^{\frac{p_2}{s}+\varepsilon_1}_{L^{2}}
 +\|(-\Delta)^{\frac{s}{2}}u(t)\|^{\frac{p_1}{s}+\varepsilon_2}_{L^{2}}) \nonumber \\\leq&-(\delta\eta+\circ_R(1))\|(-\Delta)^{\frac{s}{2}}u(t)\|^{2}_{L^{2}}+\circ_R(1),
  \end{align}
with $\delta=p_1 N-2s$ and $\circ_R(1)\rightarrow 0$ as $R\rightarrow \infty$ uniformly in $t$. We thus conclude
\begin{equation*}\label{ee222}
\frac{d}{dt}\mathcal{M}_{\varphi}[u(t)]\leq -\frac{\delta\eta}{2}\|(-\Delta)^{\frac{s}{2}}u(t)\|^{2}_{L^{2}},~~for~all~t\in[0,T^*).
\end{equation*}
Therefore, by the same argument as Case 1), we can obtain the desired result.
\end{proof}


When $0<p_1<\frac{2s}{N}$ and $p_2=\frac{2s}{N}$, the existence of blow-up solutions of \eqref{e} has not been proved yet.
In the following,  by using the scaling argument and the variational characteristic provided by the
sharp Gagliardo-Nirenberg inequality \eqref{gn}, we prove the existence of blow-up solutions for \eqref{e} and find the sharp threshold mass of blow-up and global existence for \eqref{e}.

\begin{theorem}
Let $u_0\in H^s_{rd}$, $N\geq2$, $s\in(\frac{1}{2},1)$, $\lambda_1=-1$, $\lambda_2=1$, $0<p_1<\frac{2s}{N}$  and $p_2=\frac{2s}{N}$. Assume that $Q$ is the ground state solution of \eqref{ell} with  $p=\frac{2s}{N}$. Then, we have the following sharp threshold mass of blow-up and global existence.

(i) If $\|u_0\|_{L^2}<\|Q\|_{L^2}$, then the solution of \eqref{e} exists
globally.

(ii) If the initial data $u_0=c\rho^{\frac{N}{2}} Q(\rho x)$, where the complex number
$c$ satisfying $|c|\geq 1$, and the real number $\rho >0$, then the
 solution $u$ of \eqref{e} with initial data $u_0$ blows up in finite time $0<T^*<\infty$, or $u(t)$ blows up in infinite time such that
 \[
 \| (-\Delta)^{\frac{s}{2}}u(t)\|_{L^{2}}\geq Ct^s\,\,\,\,\mbox{for all}\,\,\,\,t\geq t_{\ast},
 \]
 with some constants $C>0$ and $t_{\ast}>0$ that depend only on $u_{0},s,N$.
\end{theorem}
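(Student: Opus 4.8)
The plan is to prove (i) by a soft energy estimate built on the sharp $L^2$-critical Gagliardo--Nirenberg inequality \eqref{gn}, and (ii) by a localized radial virial argument in the spirit of \cite{bou}, after evaluating the conserved quantities on the explicit profile $u_0=c\rho^{N/2}Q(\rho x)$ by means of the identities \eqref{e2.3}--\eqref{e2.4}.

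For (i), apply \eqref{gn} with $p=p_2=\frac{2s}{N}$ (so $C_{opt}=\frac{p_2+1}{\|Q\|_{L^2}^{2p_2}}$) and conservation of mass to get $\frac{1}{2p_2+2}\int|u(t)|^{2p_2+2}\,dx\le\frac12(\|u_0\|_{L^2}/\|Q\|_{L^2})^{2p_2}\|(-\Delta)^{s/2}u(t)\|_{L^2}^2$. Since $\lambda_1=-1$ the $p_1$-term of $E$ is nonnegative; discarding it and using conservation of energy yields
\[
E(u_0)=E(u(t))\ \ge\ \tfrac12\Big(1-(\|u_0\|_{L^2}/\|Q\|_{L^2})^{2p_2}\Big)\,\|(-\Delta)^{s/2}u(t)\|_{L^2}^2 .
\]
When $\|u_0\|_{L^2}<\|Q\|_{L^2}$ the bracket is positive, so $\|(-\Delta)^{s/2}u(t)\|_{L^2}$ is bounded on $[0,T^*)$; together with mass conservation this bounds $\|u(t)\|_{H^s}$, and the blow-up alternative of Proposition~2.1 forces $T^*=\infty$.

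For (ii), first use \eqref{e2.3}--\eqref{e2.4} with $p=p_2$ (which give $\|(-\Delta)^{s/2}Q\|_{L^2}^2=\tfrac1{p_2}\|Q\|_{L^2}^2$ and $\int|Q|^{2p_2+2}=\tfrac{p_2+1}{p_2}\|Q\|_{L^2}^2$) to compute $\|u_0\|_{L^2}=|c|\,\|Q\|_{L^2}\ge\|Q\|_{L^2}$ and
\[
E(u_0)=\frac{\rho^{2s}\|Q\|_{L^2}^2}{2p_2}\big(|c|^2-|c|^{2p_2+2}\big)+\frac{1}{2p_1+2}\,\|u_0\|_{2p_1+2}^{2p_1+2},
\]
the first summand being $\le0$ because $|c|\ge1$. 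Since $u\in C([0,T^*),H^{2s})$ is radial and $p_2=\frac{2s}{N}$, apply the refined localized radial virial estimate, Lemma~\ref{Lemma4.1}, with $\lambda_1=-1$, $\lambda_2=1$: eliminate $\|u(t)\|_{2p_2+2}^{2p_2+2}$ through conservation of energy, choose $\eta>0$ small so that $\psi_{1,R}-C(\eta)\psi_{2,R}^{N/2s}\ge0$ and then $R\gg1$, and absorb the term $R^{-p_1(N-1)+\varepsilon_1 s}\|(-\Delta)^{s/2}u(t)\|_{L^2}^{p_1/s+\varepsilon_1}$ by Young's inequality (legitimate since $\tfrac{p_1}{s}+\varepsilon_1<2$). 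This gives, uniformly in $t$,
\[
\frac{d}{dt}\mathcal{M}_{\varphi_R}[u(t)]\ \le\ 8sE(u_0)-\frac{2N(p_2-p_1)}{p_1+1}\,\|u(t)\|_{2p_1+2}^{2p_1+2}+o_R(1).
\]
Once $\frac{d}{dt}\mathcal{M}_{\varphi_R}[u(t)]\le-\delta<0$ has been secured for $R$ large, the dichotomy follows exactly as in \cite{bou}: if $T^*<\infty$ we are done, while if $T^*=\infty$, integration gives $\mathcal{M}_{\varphi_R}[u(t)]\le-\delta t+C$, whereas Lemma~\ref{Lemma3.1}, the interpolation bound $\||\nabla|^{1/2}u\|_{L^2}\le\|u\|_{L^2}^{1-\frac1{2s}}\|(-\Delta)^{s/2}u\|_{L^2}^{\frac1{2s}}$ (valid for $s>\tfrac12$) and mass conservation give $|\mathcal{M}_{\varphi_R}[u(t)]|\le C\|(-\Delta)^{s/2}u(t)\|_{L^2}^{1/s}$ — using also that $\|(-\Delta)^{s/2}u(t)\|_{L^2}\ge c_0>0$, which follows from \eqref{gn} and conservation of energy as in the proof of Theorem~3.1; comparing the two bounds yields $\|(-\Delta)^{s/2}u(t)\|_{L^2}\ge Ct^s$ for $t\ge t_*$.

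The main obstacle is that \eqref{e} has no scaling invariance, so the virial identity does not close by itself: the $p_1$-nonlinearity has to be handled, which is exactly why $\lambda_1=-1$ is imposed — it makes the $p_1$-contribution $-\frac{2N(p_2-p_1)}{p_1+1}\|u(t)\|_{2p_1+2}^{2p_1+2}$ to $\frac{d}{dt}\mathcal{M}_{\varphi_R}[u(t)]$ nonpositive. The real difficulty is in forcing $\frac{d}{dt}\mathcal{M}_{\varphi_R}[u(t)]$ to be \emph{strictly} negative for the prescribed data: if $E(u_0)\le0$ (for instance $|c|>1$ with $\rho$ taken large, since $2s>Np_1$) the displayed bound does it immediately, but in the borderline regime, where the defocusing $p_1$-energy renders $E(u_0)>0$, one must retain and exploit the strictly negative $p_1$-term together with the fact that $u_0$ saturates the sharp inequality \eqref{gn} — this is where the variational characterization of $Q$ is essential. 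Keeping control simultaneously of the $\|(-\Delta)^{s/2}u\|_{L^2}^{p_1/s+\varepsilon_1}$ error and of the $\psi$-terms in Lemma~\ref{Lemma4.1} is the technical heart of the argument.
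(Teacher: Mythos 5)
Your part (i) is exactly the paper's argument: drop the defocusing $p_1$-term from the energy, apply \eqref{gn} with $p=p_2=\tfrac{2s}{N}$ and mass conservation, and invoke the blow-up alternative. Your part (ii) also follows the paper's route in all its ingredients: the Poho\v{z}aev evaluation of $E(u_0)$ on $c\rho^{N/2}Q(\rho\cdot)$, the refined localized virial estimate of Lemma~2.8 with $\eta$ chosen so that $\psi_{1,R}-C(\eta)\psi_{2,R}^{N/2s}\ge 0$, and the final dichotomy obtained by playing $\mathcal{M}_{\varphi_R}[u(t)]\le -ct$ against the bound $|\mathcal{M}_{\varphi_R}[u(t)]|\le C\bigl(\|(-\Delta)^{s/2}u(t)\|_{L^2}^{1/s}+1\bigr)$ coming from Lemma~2.4 and the interpolation $\||\nabla|^{1/2}u\|_{L^2}\le\|u\|_{L^2}^{1-\frac{1}{2s}}\|(-\Delta)^{s/2}u\|_{L^2}^{\frac{1}{2s}}$. (The auxiliary lower bound $\|(-\Delta)^{s/2}u(t)\|_{L^2}\ge c_0$ you import from Theorem~3.1 is not needed for this comparison and, as you derive it there, it presupposes $E(u_0)<0$.)

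The genuine gap is the one you yourself flag and then leave open: you never actually secure $\frac{d}{dt}\mathcal{M}_{\varphi_R}[u(t)]\le-\delta<0$ when the defocusing $p_1$-energy makes $E(u_0)>0$, and the appeal to ``the variational characterization of $Q$'' in that regime is a placeholder, not an argument. The paper does not confront this case at all: it \emph{chooses} $\rho$ so large that
\[
\frac{|c|^{2p_1}\|Q\|_{L^{2p_1+2}}^{2p_1+2}}{(p_1+1)\,(|c|^{2p_2}-1)\,\|(-\Delta)^{s/2}Q\|_{L^2}^{2}}<\rho^{\,2s-Np_1},
\]
which is possible because $2s>Np_1$ makes the negative $\rho^{2s}$-term dominate the positive $\rho^{Np_1}$-term once $|c|>1$; this forces $E(u_0)<0$, and then Lemma~2.8 immediately gives $\frac{d}{dt}\mathcal{M}_{\varphi_R}[u(t)]\le 4sE(u_0)<0$ after absorbing the error terms. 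So to reproduce the paper's proof you should simply impose that choice of $\rho$ and discard the borderline regime. Be aware, though, that this means the argument (yours and the paper's alike) does not cover all data advertised in the statement: for $|c|=1$ one has $E(u_0)=\frac{\rho^{Np_1}}{2p_1+2}\|Q\|_{L^{2p_1+2}}^{2p_1+2}>0$ for every $\rho>0$, and for $|c|>1$ with $\rho$ small the energy is likewise positive, so the ``real difficulty'' you identify is real but is resolved in the paper only by restricting the initial data, not by the refined variational argument you sketch.
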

\textbf{Remark.} As far as we know, this result has not been proved when $\lambda_1=0$.
However, our method can be easily applied to the case of $\lambda_1=0$.
Therefore, this result is new even for \eqref{e} with $\lambda_1=0$.
\begin{proof}
(i) We deduce from the energy conservation \eqref{h} and the sharp Gagliardo-Nirenberg inequality\eqref{gn} that for all $t\in[0,T^*)$
\begin{align*}
E(u(t)) =&\frac{1}{2}\int_{\mathbb{R}^N} |(-\Delta)^{s/2} u(t,x)|^2 dx+\frac{1}{2p_1+2}\int_{\mathbb{R}^N}
|u(t,x)|^{2p_1+2}dx \nonumber\\&-\frac{1}{2p_2+2}\int_{\mathbb{R}^N}
|u(t,x)|^{2p_2+2}dx\nonumber\\ \geq & \left(\frac{1}{2}-\frac{\|u_0\|_{L^2}^{2p_2}}{2\|Q\|_{L^2}^{2p_2}}\right)\|(-\Delta)^{s/2} u(t)\|_{L^2}^2.
\end{align*}
From the hypothesis $\|u_0\|_{L^2}<\|Q\|_{L^2}$, there exists a constant $C>0$ such that
 $E(u_0)=E(u(t))\geq C\|(-\Delta)^{s/2} u(t)\|_{L^2}^2$ for all $t\in[0,T^*)$. Then, $u(t)$ is bounded in $H^s$ for all  $t\in[0,T^*)$ by the conservation of mass, and $u(t)$ exists globally in $H^s$ by the local well-posedness (see Proposition 2.1). This completes the proof of (i).

(ii)
 By the
definition of initial data $u_0(x)=c\rho^{\frac{N}{2}} Q(\rho x)$ and the Poho\u{z}aev identity for equation \eqref{ell}, i.e., $\|(-\Delta)^{s/2} Q\|_{L^2}^2=\frac{1}{p_2+1}\|Q\|^{2p_2+2}_{L^{2p_2+2}}$, we deduce that
\begin{align}\label{h1}
E(u_0)=&\frac{|c|^2\rho^{2s}}{2}\int_{\mathbb{R}^N} |(-\Delta)^{s/2} Q(x)|^2dx+\frac{|c|^{2p_1+2}\rho^{Np_1}}{2p_1+2}\int_{\mathbb{R}^N}
|Q(x)|^{2p_1+2}dx\nonumber \\
&-\frac{|c|^{2p_2+2}\rho^{Np_2}}{2p_2+2}\int_{\mathbb{R}^N}
|Q(x)|^{2p_2+2}dx\nonumber \\
=&-\frac{|c|^2\rho^{2s}}{2}(|c|^{2p_2}-1)\|(-\Delta)^{s/2} Q\|_{L^2}^2+\frac{|c|^{2p_1+2}\rho^{Np_1}}{2p_1+2}\int_{\mathbb{R}^N}
|Q(x)|^{2p_1+2}dx.
\end{align}
Now, taking $\rho$ such that
\[
\frac{|c|^{2p_1}\|Q\|^{2p_1+2}_{L^{2p_1+2}}}{(p_1+1)(|c|^{2p_2}-1)\|(-\Delta)^{s/2} Q\|_{L^2}^2}< \rho^{2s-
Np_1}.
\]
This implies $E(u_0) < 0$.

On the other hand, by a similar argument in \cite{bou}, we can choose $\varphi_R(r)$ and $\eta>0$ sufficiently small such that
\[
\psi_{1,R}(r)-C(\eta)(\psi_{2,R}(r))^{\frac{N}{2s}}\geq0~~for~all~r>0~and~R>0.
\]
Thus if we choose $\eta\ll 1$ sufficiently small and then $R\gg 1$ sufficiently large, we can apply Lemma 2.8 to deduce that
\begin{equation}\label{l10}
\frac{d}{dt}\mathcal{M}_{\varphi_R}[u(t)]\leq 4sE(u_{0}),~~for~all~t\in[0,T^*).
\end{equation}

Next, we suppose that $u(t)$ exists for all time $t\geq0$, i.e., $T^*=\infty$. It follows from \eqref{l10} that
\begin{equation}\label{e4.9}
\mathcal{M}_{\varphi_R}[u(t)]\leq -ct\,\,\,\,\mbox{for} \,\,\,t>t_{0},
\end{equation}
with some sufficiently large time $t_{0}>0$ and some constant $c>0$ depending only on $s$ and $E(u_0)<0$. On the other hand, if we invoke Lemma 2.4, we see that
\begin{align}\label{e4.10}
\mathcal{M}_{\varphi_R}[u(t)]&\leq C(\varphi_{R})(\||\nabla|^{\frac{1}{2}}u(t)\|^{2}_{L^{2}}+\| u(t)\|_{L^{2}}\||\nabla|^{\frac{1}{2}}u(t)\|_{L^{2}})\nonumber\\
&\leq C(\varphi_{R})(\||\nabla|^{\frac{1}{2}}u(t)\|^{2}_{L^{2}}+1)\nonumber\\
&\leq C(\varphi_{R})(\|(-\Delta)^{\frac{s}{2}}u(t)\|^{\frac{1}{s}}_{L^{2}}+1),
\end{align}
where we also used the conservation of $L^{2}$-mass together with the interpolation estimate $\| |\nabla|^{\frac{1}{2}} u\|_{L^{2}}\leq\| u\|_{L^{2}}^{1-\frac{1}{2s}}\|(-\Delta)^{\frac{s}{2}}u\|_{L^{2}}^{\frac{1}{2s}}$ for $s>\frac{1}{2}$.
Combining \eqref{e4.9} and \eqref{e4.10}, we finally get
 \[
 \| (-\Delta)^{\frac{s}{2}}u(t)\|_{L^{2}}\geq Ct^s\,\,\,\,\mbox{for all}\,\,\,\,t\geq t_{\ast},
 \]
 with some constants $C>0$ and $t_{\ast}>0$ that depend only on $u_{0},s,N$.
\end{proof}

\section{Dynamic of blow-up solutions in the case of $L^2$-critical}
In this section, we investigate some dynamical properties of blow-up solutions for \eqref{e} with $\lambda_1=-1$, $\lambda_2=1$,
$0<p_1<\frac{2s}{N}$ and $p_2=\frac{2s}{N}$. In this case, we  prove that there exists the sharp threshold
mass $\|Q\|_{L^2}$ of blow-up and global existence in Section 3. Hence, the study of the dynamical properties of blow-up
solutions around the sharp threshold mass is of particular interest. For this aim, we firstly obtain the following refined compactness result by using the profile
decomposition of bounded sequences in $H^s$ and the inequality \eqref{gn}.
\begin{lemma}
Let $N\geq2$ and $s\in(\frac{1}{2},1)$. If $\{u_n\}_{n=1}^{\infty}$ be a bounded sequence in $H^s$, such that
\begin{equation*}
\limsup_{n\rightarrow \infty}\|(-\Delta)^{s/2} u_n\|_{L^2}\leq M,~~~\limsup_{n\rightarrow \infty}\|u_n\|_{L^{4s/N+2}}\geq m>0.
\end{equation*}
Then, there exist $V\in H^s$ and $\{x_n\}_{n=1}^{\infty}\subset \mathbb{R}^N$ such that, up to a subsequence,
\begin{equation*}
u_n(\cdot+x_n)\rightharpoonup V~~weakly~in~H^s
\end{equation*}
with
\begin{equation*}
\|V\|_{L^2}^{4s/N}\geq \frac{m^{4s/N+2}N\|Q\|_{L^2}^{\frac{4s}{N}}}{(2s+N)M^2}.
\end{equation*}
where $Q$ is the ground state solution of \eqref{ell} with $p=\frac{2s}{N}$.
\end{lemma}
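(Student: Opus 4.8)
The plan is to apply the profile decomposition (Proposition 2.3) to the bounded sequence $\{u_n\}$ and extract a single profile carrying enough $L^{4s/N+2}$-mass, then use the sharp Gagliardo–Nirenberg inequality \eqref{gn} with $p=\frac{2s}{N}$ to convert the lower bound on the $L^{4s/N+2}$-norm into a lower bound on the $L^2$-norm of that profile. First I would write, along a subsequence, $u_n(x)=\sum_{j=1}^{l}V^j(x-x_n^j)+v_n^l(x)$ with the orthogonality properties (i), \eqref{3.b}, \eqref{3.c}. Since $q=\frac{4s}{N}+2\in(2,\frac{2N}{N-2s})$ (this uses $s>\tfrac12$, or more precisely $\frac{4s}{N}+2<\frac{2N}{N-2s}\Leftrightarrow p_2=\frac{2s}{N}<\frac{2s}{N-2s}$, which holds), the remainder satisfies $\limsup_n\|v_n^l\|_{L^q}\to0$ as $l\to\infty$. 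Combining this with \eqref{3.c} gives the asymptotic Pythagorean expansion
\[
m^{q}\leq\limsup_{n\to\infty}\|u_n\|_{L^q}^{q}=\sum_{j=1}^{\infty}\|V^j\|_{L^q}^{q}.
\]

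Next I would estimate each $\|V^j\|_{L^q}^{q}$ by the sharp Gagliardo–Nirenberg inequality in the $L^2$-critical case, where $C_{opt}=\frac{p+1}{\|Q\|_{L^2}^{2p}}$ with $p=\frac{2s}{N}$, i.e.
\[
\|V^j\|_{L^q}^{q}\leq\frac{2s/N+1}{\|Q\|_{L^2}^{4s/N}}\,\|(-\Delta)^{s/2}V^j\|_{L^2}^{2}\,\|V^j\|_{L^2}^{4s/N}
\leq\frac{2s/N+1}{\|Q\|_{L^2}^{4s/N}}\Big(\sup_j\|V^j\|_{L^2}^{4s/N}\Big)\sum_{k}\|(-\Delta)^{s/2}V^k\|_{L^2}^{2}.
\]
Summing over $j$ and using \eqref{3.b} (which gives $\sum_j\|V^j\|_{\dot H^s}^2\leq\liminf_n\|u_n\|_{\dot H^s}^2\leq M^2$) yields
\[
m^{q}\leq\frac{2s/N+1}{\|Q\|_{L^2}^{4s/N}}\,M^2\,\sup_j\|V^j\|_{L^2}^{4s/N}.
\]
Therefore $\sup_j\|V^j\|_{L^2}^{4s/N}\geq\frac{m^{q}\|Q\|_{L^2}^{4s/N}}{(2s/N+1)M^2}=\frac{m^{4s/N+2}N\|Q\|_{L^2}^{4s/N}}{(2s+N)M^2}$, and since the supremum is attained (the series $\sum\|V^j\|_{L^2}^2$ converges, so the $L^2$-norms tend to $0$), there is some index $j_0$ with $\|V^{j_0}\|_{L^2}$ achieving — or bounded below by — this quantity; set $V:=V^{j_0}$ and $x_n:=x_n^{j_0}$. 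Finally, to get the weak convergence statement $u_n(\cdot+x_n)\rightharpoonup V$ weakly in $H^s$, I would note that by construction $u_n(\cdot+x_n^{j_0})=V^{j_0}+\sum_{j\neq j_0}V^j(\cdot+x_n^{j_0}-x_n^j)+v_n^{l}(\cdot+x_n^{j_0})$; the terms with $j\neq j_0$ converge weakly to $0$ by the pairwise orthogonality (i) (translations to infinity), and the remainder $v_n^l(\cdot+x_n^{j_0})$ can be controlled so that, passing to a diagonal subsequence in $l$ and $n$, the weak $H^s$-limit is exactly $V$.

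The main obstacle I anticipate is the bookkeeping in the last step: making the weak limit come out to be precisely the profile $V^{j_0}$ requires either a standard diagonal argument over $l$ together with the fact that $v_n^l\rightharpoonup 0$ is not immediate (only $L^q$-smallness is given), so one must argue that the weak $H^s$-limit of $u_n(\cdot+x_n^{j_0})$ exists along a subsequence and then identify its $L^q$ and $\dot H^s$ data with those of $V^{j_0}$ via \eqref{3.b}–\eqref{3.c}; alternatively one reproves this identification directly from the construction of the profile decomposition. A second minor point is checking the exponent condition $\frac{4s}{N}+2\in(2,\frac{2N}{N-2s})$ so that \eqref{gn} and the remainder estimate both apply, and verifying the arithmetic $(2p_2+2)-\frac{p_2N}{s}=\frac{4s}{N}$ that makes the Gagliardo–Nirenberg exponents on $\|(-\Delta)^{s/2}V^j\|_{L^2}$ equal to $2$ in the critical case; both are routine.
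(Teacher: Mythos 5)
Your proposal is correct and follows essentially the same route as the paper's proof: profile decomposition, the almost-orthogonal expansion of the $L^{4s/N+2}$-norm, the sharp $L^2$-critical Gagliardo--Nirenberg bound on each profile combined with $\sum_j\|V^j\|_{\dot H^s}^2\leq M^2$, selection of the profile $j_0$ attaining the supremum, and identification of the weak limit via the pairwise orthogonality of the translations. The subtlety you flag about identifying the weak limit of the remainder is handled in the paper exactly as you suggest, by noting that its $L^{4s/N+2}$-norm is controlled by $\limsup_n\|v_n^l\|_{L^{4s/N+2}}$ and hence vanishes.
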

\begin{proof}
We deduce from the profile decomposition (Proposition 2.3) that
\begin{equation}\label{401}
u_n(x)=\sum_{j=1}^{l}V^j(x-x_n^j)+v_n^l,
\end{equation}
with $\limsup_{n\rightarrow \infty}\|v_n^l\|_{L^q}\rightarrow 0$ as $l\rightarrow \infty$.

From \eqref{401}, \eqref{gn} and Proposition 2.3, we obtain
\begin{align}\label{4111}
 m^{4s/N+2}\leq &\limsup_{n\rightarrow \infty}\int |u_n|^{4s/N+2}dx \nonumber\\\leq &
\int |\sum_{j=1}^{\infty}V^j(x-x_n^j)|^{4s/N+2}dx
\nonumber\\
  \leq &\sum_{j=1}^{\infty}\int |V^j|^{4s/N+2}dx\nonumber\\
  \leq &\sum_{j=1}^{\infty}\frac{2s+N}{N\|Q\|_{L^2}^{4s/N}}\|V^j\|_{L^2}^{4s/N}\|(-\Delta)^{s/2} V^j\|_{L^2}^2\nonumber\\
  \leq &\frac{2s+N}{N\|Q\|_{L^2}^{4s/N}}\sup \{\|V^j\|_{L^2}^{4s/N},j\geq 1\}\sum_{j=1}^{\infty}\|(-\Delta)^{s/2} V^j\|_{L^2}^2.
\end{align}

On the other hand, we observe that
\begin{equation}\label{412}
\sum_{j=1}^{\infty}\|(-\Delta)^{s/2} V^j\|_{L^2}^2\leq \limsup_{n\rightarrow \infty}\|(-\Delta)^{s/2} u_n\|_{L^2}^2\leq M^2.
\end{equation}
Therefore, it follows from \eqref{4111} and \eqref{412} that
\[
\sup \{\|V^j\|_{L^2}^{4s/N},j\geq 1\}\geq \frac{m^{4s/N+2}N\|Q\|_{L^2}^{\frac{4s}{N}}}{(2s+N)M^2}.
\]
Since the series $\sum_{j=1}^{\infty}\| V^j\|_{L^2}^2$ is convergent, there exists $j_0\geq 1$ such that
\[
 \|V^{j_0}\|_{L^2}^{\frac{4s}{N}}\geq \frac{m^{\frac{4s}{N}+2}N\|Q\|_{L^2}^{\frac{4s}{N}}}{(2s+N)M^2}.
\]
From \eqref{3.a}, a change of variables $x=x+x_n^{j_0}$ gives
\[
u_n(x+x_n^{j_0})=V^{j_0}(x)+\sum_{j\neq j_0}V^j(x+x_n^{j_0}-x_n^j)+v_n^l(x+x_n^{j_0}).
\]
Using the pairwise orthogonality of $\{x^j_n\}_{j=1}^{\infty}$, we have
\[
V^j(\cdot+x_n^{j_0}-x_n^j)\rightharpoonup 0,~~weakly~in~H^s~~for~every~j\neq j_0.
\]
Hence, we have
\[
u_n(\cdot+x_n^{j_0})\rightharpoonup V^{j_0}+\tilde{v}^l ,~~weakly~in~H^s.
\]
where $\tilde{v}^l$ denote the weak limit of $v_n^l(x+x_n^{j_0})$.
However,
\[
 \int |\tilde{v}^l|^{\frac{4s}{N}+2} dx\leq \limsup_{n\rightarrow \infty}\int |v^l_n|^{\frac{4s}{N}+2}dx\rightarrow 0.
\]
Thus, it follows from uniqueness of weak limit that $\tilde{v}^l=0$ for all $l\geq J_0$.
Therefore,
\[
u_n(\cdot+x_n^{j_0})\rightharpoonup V^{j_0},~~weakly~in~H^s.
\]
This completes the proof.
\end{proof}

By applying the refined compactness Lemma 4.1, we can obtain the following $L^2$-concentration and rate of $L^2$-concentration of blow-up solutions of \eqref{e}.

\begin{theorem}($L^2$-concentration)
Let $N\geq2$, $s\in(\frac{1}{2},1)$, $u_0\in H^s$, $\lambda_1=-1$, $\lambda_2=1$, $0<p_1<\frac{2s}{N}$ and $p_2=\frac{2s}{N}$. If the solution $u$ of \eqref{e} blows up in finite time $T^*>0$.
Let $a(t)$ be a real-valued nonnegative function defined on $[0,T^*)$ satisfying $a(t)\|(-\Delta)^{s/2} u(t)\|_{L^2}^{1/s}\rightarrow \infty $ as $t\rightarrow T^*$. Then there exists $x(t)\in \mathbb{R}^{N}$ such that
\begin{equation}\label{41}
\liminf_{t\rightarrow T^*}\int_{|x-x(t)|\leq a(t)}|u(t,x)|^2dx\geq \int_{\mathbb{R}^N} |Q(x)|^2dx.
\end{equation}
where $Q$ is the ground state solution of \eqref{ell} with $p=\frac{2s}{N}$.



\end{theorem}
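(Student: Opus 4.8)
The plan is to run the Hmidi--Keraani rescaling argument, using the refined compactness Lemma~4.1 (the profile-decomposition-based lemma proved just above) in place of the usual profile decomposition, and to verify that neither the loss of scaling invariance nor the subcritical term $|u|^{2p_1}u$ breaks the mechanism. First I would reduce to a sequential statement: fix an arbitrary sequence $t_n\uparrow T^*$ and set $\rho_n:=\|(-\Delta)^{s/2}u(t_n)\|_{L^2}^{-1/s}$, which tends to $0$ since $\|(-\Delta)^{s/2}u(t_n)\|_{L^2}\to\infty$ by Proposition~2.1 and mass conservation. Define the rescaled functions $v_n(x):=\rho_n^{N/2}u(t_n,\rho_n x)$. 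Using the scaling law $(-\Delta)^{s/2}[f(\rho\cdot)]=\rho^{s}\big((-\Delta)^{s/2}f\big)(\rho\cdot)$ and conservation of mass, one checks $\|(-\Delta)^{s/2}v_n\|_{L^2}=1$ and $\|v_n\|_{L^2}=\|u_0\|_{L^2}$, so $\{v_n\}$ is bounded in $H^s$ and $M:=\limsup_n\|(-\Delta)^{s/2}v_n\|_{L^2}=1$.

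Second, I would extract a strictly positive lower bound on the critical Lebesgue norm of $v_n$ from the energy. Dividing $E(u_0)=E(u(t_n))$ by $\|(-\Delta)^{s/2}u(t_n)\|_{L^2}^2$ and using $\|v_n\|_{2p_2+2}^{2p_2+2}=\|u(t_n)\|_{2p_2+2}^{2p_2+2}/\|(-\Delta)^{s/2}u(t_n)\|_{L^2}^2$ (this holds because $p_2=\tfrac{2s}{N}$ is the $L^2$-critical exponent) gives
\[
\frac{E(u_0)}{\|(-\Delta)^{s/2}u(t_n)\|_{L^2}^2}=\frac12+\frac{\|u(t_n)\|_{2p_1+2}^{2p_1+2}}{(2p_1+2)\|(-\Delta)^{s/2}u(t_n)\|_{L^2}^2}-\frac{1}{2p_2+2}\|v_n\|_{2p_2+2}^{2p_2+2}.
\]
The left-hand side tends to $0$, and the subcritical term tends to $0$ as well: by \eqref{gn} it is bounded by $C\|(-\Delta)^{s/2}u(t_n)\|_{L^2}^{\frac{p_1N}{s}-2}\|u_0\|_{L^2}^{2p_1+2-\frac{p_1N}{s}}$, whose exponent $\frac{p_1N}{s}-2$ is negative precisely because $p_1<\tfrac{2s}{N}$. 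Hence $\|v_n\|_{2p_2+2}^{2p_2+2}\to p_2+1=:m^{2p_2+2}>0$, so $\limsup_n\|v_n\|_{L^{4s/N+2}}\ge m$. Applying the refined compactness Lemma~4.1 with these $M$ and $m$ yields $V\in H^s$ and $\{x_n\}\subset\mathbb{R}^N$ such that, along a subsequence, $v_n(\cdot+x_n)\rightharpoonup V$ weakly in $H^s$ with
\[
\|V\|_{L^2}^{4s/N}\ge\frac{m^{4s/N+2}N\|Q\|_{L^2}^{4s/N}}{(2s+N)M^2}=\frac{(p_2+1)N}{2s+N}\|Q\|_{L^2}^{4s/N}=\|Q\|_{L^2}^{4s/N},
\]
the last equality because $p_2+1=1+\tfrac{2s}{N}=\tfrac{N+2s}{N}$; thus $\|V\|_{L^2}\ge\|Q\|_{L^2}$.

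Third, I would transfer this concentration back to $u$. Since $H^s(\mathbb{R}^N)\hookrightarrow L^2_{\mathrm{loc}}(\mathbb{R}^N)$ compactly, $v_n(\cdot+x_n)\to V$ strongly in $L^2$ on every ball, so for fixed $R>0$ one has $\int_{|x|\le R}|v_n(x+x_n)|^2\,dx\to\int_{|x|\le R}|V|^2\,dx$. Undoing the scaling with $x(t_n):=\rho_n x_n$ gives $\int_{|x|\le R}|v_n(x+x_n)|^2\,dx=\int_{|y-x(t_n)|\le\rho_nR}|u(t_n,y)|^2\,dy$. The hypothesis $a(t)\|(-\Delta)^{s/2}u(t)\|_{L^2}^{1/s}\to\infty$ means $\rho_nR\le a(t_n)$ for $n$ large, whence
\[
\liminf_{n\to\infty}\int_{|y-x(t_n)|\le a(t_n)}|u(t_n,y)|^2\,dy\ge\int_{|x|\le R}|V|^2\,dx,
\]
and letting $R\to\infty$ yields $\liminf_n\int_{|y-x(t_n)|\le a(t_n)}|u(t_n,y)|^2\,dy\ge\|V\|_{L^2}^2\ge\int_{\mathbb{R}^N}|Q|^2\,dx$. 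To pass from a subsequence to a single function $x(t)$ valid as $t\to T^*$, I would take $x(t)$ to be a maximizer of the continuous map $y\mapsto\int_{|x-y|\le a(t)}|u(t,x)|^2\,dx$ (which exists since $u(t)\in L^2$) and argue by contradiction: if $\liminf_{t\to T^*}$ of this maximum were $<\|Q\|_{L^2}^2$, a sequence realizing the liminf would, by the above, also admit translates along which the localized mass is $\ge\|Q\|_{L^2}^2$, contradicting maximality.

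I expect the main obstacle to be the second step: forcing the energy identity to produce a \emph{strictly positive} limit for $\|v_n\|_{L^{4s/N+2}}$ while correctly tracking how the subcritical nonlinearity rescales — it is exactly the assumption $0<p_1<\tfrac{2s}{N}$ that makes that term negligible — and then checking that the constant furnished by Lemma~4.1 collapses to $\|Q\|_{L^2}^{4s/N}$ precisely because $p_2=\tfrac{2s}{N}$, which is what pins the sharp concentration mass to $\|Q\|_{L^2}$. The rescaling bookkeeping (correct powers of $\rho_n$ under $(-\Delta)^{s/2}$), the compact-embedding step, and the final maximizer/contradiction argument are routine by comparison.
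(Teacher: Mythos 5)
Your proposal is correct and follows essentially the same route as the paper: rescale along an arbitrary sequence $t_n\uparrow T^*$, use the conserved energy together with the Gagliardo--Nirenberg bound on the subcritical term to extract a positive limit for $\|v_n\|_{L^{2p_2+2}}$, apply the refined compactness Lemma~4.1 to get a weak profile $V$ with $\|V\|_{L^2}\ge\|Q\|_{L^2}$, undo the scaling, and finish with a maximizer $x(t)$ of the localized mass. The only (immaterial) differences are your normalization $\|(-\Delta)^{s/2}v_n\|_{L^2}=1$ in place of the paper's $\|(-\Delta)^{s/2}Q\|_{L^2}$, and your use of compact local embedding where the paper invokes weak lower semicontinuity of the local $L^2$ norm.
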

\textbf{Remark.} Theorem 4.2 gives the $L^2$-concentration and rate of $L^2$-concentration of blow-up solutions of \eqref{e}. Indeed, we can choose $a(t)=\frac{1}{{\|(-\Delta)^{s/2} u(t)\|}^{\frac{1}{s}-\delta}_{L^2}}$ with $0<\delta<\frac{1}{s}$. It is obvious that $\lim_{t\rightarrow T^*} a(t)=0$ and $a(t)$ satisfies the assumption in Theorem 4.2. Applying Theorem 4.2, if $u$ is a blow-up solution of \eqref{e} and $T^*$ its blow-up time, then for every $r>0$, there exists a function $x(t)\in\mathbb{R}^N$ such that
\[
\liminf_{t\rightarrow T^*}\int_{|x-x(t)|\leq r}|u(t,x)|^2dx\geq\int_{\mathbb{R}^N}|Q(x)|^2dx.
\]
Meanwhile, it follows from the choice of $a(t)$ that for any function $0<a(t)\leq \frac{1}{{\|(-\Delta)^{s/2} u(t)\|}^{\frac{1}{s}-\delta}_{L^2}}$, \eqref{41} holds, which implies that the rate of $L^2$-concentration of blow-up solutions of \eqref{e} is $\frac{1}{{\|(-\Delta)^{s/2} u(t)\|}^{\frac{1}{s}-\delta}_{L^2}}$ with $0<\delta<\frac{1}{s}$.

\begin{proof}
 Set
\[
\rho^s(t)=\|(-\Delta)^{s/2} Q\|_{L^2}/\|(-\Delta)^{s/2} u(t)\|_{L^2}~~and~~v(t,x)=\rho^{\frac{N}{2}}(t)u(t,\rho(t) x).
\]
Let $\{t_n\}_{n=1}^\infty$ be an any time sequence such that $t_n\rightarrow T^*$, $\rho_n:=\rho(t_n)$ and $v_n(x):=v(t_n,x)$.
Then, the sequence $\{v_n\}$
satisfies
\begin{equation}\label{44}
\|v_n\|_{L^2}=\|u(t_n)\|_{L^2}=\|u_0\|_{L^2},~~\|(-\Delta)^{s/2} v_n\|_{L^2}=\rho_n^s\|(-\Delta)^{s/2} u(t_n)\|_{L^2}=\|(-\Delta)^{s/2} Q\|_{L^2}.
\end{equation}
Observe that
\begin{align}\label{45}
 H(v_n):=&\frac{1}{2}\int_{\mathbb{R}^N} |(-\Delta)^{s/2} v_n(x)|^2dx-\frac{1}{2p_2+2}\int_{\mathbb{R}^N}  |v_n(x)|^{2p_2+2}dx
\nonumber\\
  =&\rho_n^{2s}\left(\frac{1}{2}\int_{\mathbb{R}^N} |(-\Delta)^{s/2} u(t_n,x)|^2dx-\frac{1}{2p_2+2}\int_{\mathbb{R}^N}  |u(t_n,x)|^{2p_2+2}dx\right)\nonumber\\
  =&\rho_n^{2s}\left(E(u_0)-\frac{1}{2p_1+2}\int_{\mathbb{R}^N} |u(t_n,x)|^{2p_1+2}dx\right).
\end{align}
Applying the following Gagliardo-Nirenberg inequality
\begin{equation*}
\int_{\mathbb{R}^N} |u(x)|^{2p_1+2}dx\leq C\|u\|_{L^2}^{2p_1+2-\frac{Np_1}{s}}\|(-\Delta)^{s/2} u\|_{L^2}^{\frac{Np_1}{s}}\leq C\|u\|_{L^2}^{2p_1+2-\frac{Np_1}{s}}\|(-\Delta)^{s/2} u\|_{L^2}^2,
\end{equation*}
where $0<p_1<\frac{2s}{N}$.
It follows that $H(v_n)\rightarrow 0$ as $n\rightarrow\infty$,
 which implies
$\int_{\mathbb{R}^N} |v_n(x)|^{2p_2+2}dx\rightarrow \frac{2s+N}{N}\|(-\Delta)^{s/2} Q\|_{L^2}^2$.

Set $m^{2p_2+2}=\frac{2s+N}{N}\|(-\Delta)^{s/2} Q\|_{L^2}^2$ and $M=\|(-\Delta)^{s/2} Q\|_{L^2}$. Then it follows from Lemma 4.1 that there exist $V\in H^s$ and $\{x_n\}_{n=1}^\infty \subset \mathbb{R}^N$ such that, up to a subsequence,
\begin{equation}\label{46'}
v_n(\cdot +x_n)=\rho_n^{N/2}u(t_n,\rho_n(\cdot + x_n))\rightharpoonup V~~weakly~in~H^s
\end{equation}
with
\begin{equation}\label{46}
\|V\|_{L^2}\geq \|Q\|_{L^2}.
\end{equation}
Note that
\[
\frac{a(t_n)}{\rho_n}=\frac{a(t_n)\|(-\Delta)^{s/2} u(t_n)\|_{L^2}^{1/s}}{\|(-\Delta)^{s/2} Q\|_{L^2}^{1/s}}\rightarrow \infty,~~as~n\rightarrow\infty.
\]
Then for every $r>0$, there exists $n_0>0$ such that for every $n>n_0$, $r\rho_n<a(t_n)$. Therefore, using
\eqref{46'}, we obtain
\begin{align*}\label{45}
\liminf_{n\rightarrow \infty}\sup_{y\in \mathbb{R}^N}\int_{|x-y|\leq a(t_n)}|u(t_n,x)|^2dx&\geq \liminf_{n\rightarrow \infty}\sup_{y\in \mathbb{R}^N}\int_{|x-y|\leq r\rho_n}|u(t_n,x)|^2dx
\nonumber\\
  &\geq \liminf_{n\rightarrow \infty}\int_{|x-x_n|\leq r\rho_n}|u(t_n,x)|^2dx\nonumber\\
  &=\liminf_{n\rightarrow \infty}\int_{|x|\leq r}\rho_n^{N}|u(t_n,\rho_n(x+ x_n))|^2dx\nonumber\\
  &=\liminf_{n\rightarrow \infty}\int_{|x|\leq r}|v(t_n,x+ x_n)|^2dx\nonumber\\
  &\geq\liminf_{n\rightarrow \infty}\int_{|x|\leq r}|V(x)|^2dx,~~for~every~r>0,
\end{align*}
which means that
\[
\liminf_{n\rightarrow \infty}\sup_{y\in \mathbb{R}^N}\int_{|x-y|\leq a(t_n)}|u(t_n,x)|^2dx\geq\int_{ \mathbb{R}^N}|V(x)|^2dx.
\]
%
Since the sequence $\{t_n\}_{n=1}^\infty$ is arbitrary, it follows from \eqref{46} that
\begin{equation}\label{45x}
\liminf_{t\rightarrow T^*}\sup_{y\in \mathbb{R}^N}\int_{|x-y|\leq a(t)}|u(t,x)|^2dx\geq\int_{ \mathbb{R}^N}|Q(x)|^2dx.
\end{equation}
Observe that for every $t\in [0,T^*)$, the function $g(y):= \int_{|x-y|\leq a (t)}|u(t,x)|^2dx$ is continuous on $y\in \mathbb{R}^N$ and $g(y)\rightarrow 0$ as $|y|\rightarrow \infty$. So there exists a function $x(t)\in \mathbb{R}^N$ such that for every $t\in [0,T^*)$
\begin{equation*}
 \sup_{y\in \mathbb{R}^N}\int_{|x-y|\leq a (t)}|u(t,x)|^2dx=\int_{|x-x(t)|\leq a(t)}|u(t,x)|^2dx.
\end{equation*}
This and \eqref{45x} yield \eqref{41}.
\end{proof}

In the following theorem, we study the limiting profile of blow-up solutions of \eqref{e}.
\begin{theorem}
Let $u_0\in H^s$, $\lambda_1=-1$, $\lambda_2=1$, $0<p_1<\frac{2s}{N}$, and $p_2=\frac{2s}{N}$.
Assume $\|u_0\|_{L^2}=\|Q\|_{L^2}$, and the corresponding solution $u$ of \eqref{e} blows up in finite time $T^*>0$, then there exist $x(t)\in \mathbb{R}^N$ and $\theta(t)\in [0,2\pi)$ such that
\begin{equation}\label{43'}
\rho^{N/2}(t)u(t,\rho(t)(\cdot+x(t)))e^{i\theta(t)}\rightarrow Q~strongly~in~H^s,~as~t\rightarrow T^*,
\end{equation}
where $\rho(t)=\frac{\|(-\Delta)^{s/2} Q\|_{L^2}}{\|(-\Delta)^{s/2} u(t)\|_{L^2}}$.


\end{theorem}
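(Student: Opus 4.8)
The plan is to follow the standard Weinstein-type concentration-compactness argument, now adapted to the fractional setting and to the combined nonlinearity. Fix an arbitrary sequence $t_n \to T^*$ and set $\rho_n = \rho(t_n)$ and $v_n(x) = \rho_n^{N/2} u(t_n, \rho_n x)$, exactly as in the proof of Theorem~4.2. Then $\|v_n\|_{L^2} = \|u_0\|_{L^2} = \|Q\|_{L^2}$ and $\|(-\Delta)^{s/2} v_n\|_{L^2} = \|(-\Delta)^{s/2} Q\|_{L^2}$ by the choice of $\rho(t)$. The same computation as in \eqref{45} shows $H(v_n) \to 0$, hence $\int |v_n|^{2p_2+2}\,dx \to \frac{2s+N}{N}\|(-\Delta)^{s/2}Q\|_{L^2}^2$. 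Applying the refined compactness Lemma~4.1 with $m^{2p_2+2} = \frac{2s+N}{N}\|(-\Delta)^{s/2}Q\|_{L^2}^2$ and $M = \|(-\Delta)^{s/2}Q\|_{L^2}$ produces $V \in H^s$ and $\{x_n\} \subset \mathbb{R}^N$ with, up to a subsequence, $v_n(\cdot + x_n) \rightharpoonup V$ weakly in $H^s$ and $\|V\|_{L^2} \geq \|Q\|_{L^2}$.

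The key improvement over Theorem~4.2 is that the critical mass hypothesis $\|u_0\|_{L^2} = \|Q\|_{L^2}$ forces the weak limit to be \emph{exactly} $Q$ up to symmetries, and forces strong convergence. Since $\|V\|_{L^2} \geq \|Q\|_{L^2} = \|u_0\|_{L^2}$ and weak $H^s$ (hence weak $L^2$) convergence gives $\|V\|_{L^2} \leq \liminf \|v_n(\cdot+x_n)\|_{L^2} = \|u_0\|_{L^2}$, we get $\|V\|_{L^2} = \|u_0\|_{L^2}$, so $v_n(\cdot + x_n) \to V$ strongly in $L^2$. Then, by interpolation with the uniform $\dot H^s$ bound, $v_n(\cdot+x_n) \to V$ strongly in $L^q$ for all $2 \leq q < \frac{2N}{N-2s}$, in particular in $L^{2p_2+2}$, so $\int |V|^{2p_2+2}\,dx = \frac{2s+N}{N}\|(-\Delta)^{s/2}Q\|_{L^2}^2$. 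Combining $\|V\|_{L^2} = \|Q\|_{L^2}$ with this and the weak lower semicontinuity $\|(-\Delta)^{s/2}V\|_{L^2} \leq \|(-\Delta)^{s/2}Q\|_{L^2}$, the sharp Gagliardo--Nirenberg inequality \eqref{gn} in the $L^2$-critical case is saturated by $V$; hence $\|(-\Delta)^{s/2}V\|_{L^2} = \|(-\Delta)^{s/2}Q\|_{L^2}$ as well, which upgrades the weak $\dot H^s$ convergence to strong, so $v_n(\cdot+x_n) \to V$ strongly in $H^s$. Equality in \eqref{gn} means $V$ is an optimizer, so by the variational characterization of the ground state $V(x) = e^{i\theta_0} \mu^{N/2} Q(\mu(\cdot - y_0))$ for some $\theta_0, \mu, y_0$; matching $\|V\|_{L^2} = \|Q\|_{L^2}$ and $\|(-\Delta)^{s/2}V\|_{L^2} = \|(-\Delta)^{s/2}Q\|_{L^2}$ forces $\mu = 1$, so $V = e^{i\theta_0} Q(\cdot - y_0)$.

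Finally I would remove the dependence on the subsequence. The above shows: for every sequence $t_n \to T^*$ there is a subsequence along which $\rho_n^{N/2} u(t_n, \rho_n(\cdot + x_n)) \to e^{i\theta_0} Q(\cdot - y_0)$ strongly in $H^s$ for suitable $x_n, \theta_0, y_0$. Reabsorbing the translation $y_0$ into $x_n$ and relabeling, this says every sequence has a subsequence along which $\rho^{N/2}(t)u(t,\rho(t)(\cdot+x(t)))e^{i\theta(t)} \to Q$ in $H^s$ for appropriate choices of $x(t_n) \in \mathbb{R}^N$, $\theta(t_n) \in [0,2\pi)$; a standard argument (if the full limit \eqref{43'} failed there would be a sequence staying bounded away from $Q$ modulo the chosen parameters, contradicting the subsequential convergence — one must only check the parameters $x(t),\theta(t)$ can be selected measurably/consistently, e.g.\ by choosing them to nearly minimize the $H^s$ distance to $Q$) then yields the stated convergence as $t \to T^*$.

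The main obstacle I anticipate is the rigidity step: passing from ``$V$ saturates \eqref{gn}'' to ``$V = e^{i\theta_0}Q(\cdot-y_0)$'' relies on knowing that the optimizers of the fractional Gagliardo--Nirenberg inequality in the $L^2$-critical case are exactly the ground states of \eqref{ell} modulo scaling, phase, and translation (uniqueness of the optimizer up to symmetries), which for the fractional Laplacian is a nontrivial input — but it is available in the literature (Frank--Lenzmann--Silvestre type uniqueness results) and is implicitly used already in Lemma~2.2 via the value of $C_{opt}$. The secondary technical point is justifying that strong $L^2$ convergence plus a uniform $\dot H^s$ bound gives strong $L^q$ convergence for the relevant subcritical $q$, and that the remainder terms coming from the defocusing lower-order nonlinearity $|u|^{2p_1}u$ (with $p_1 < \frac{2s}{N}$) genuinely vanish in the rescaled energy $H(v_n)$ — both of which are routine interpolation/scaling estimates as already carried out in \eqref{45} of Theorem~4.2.
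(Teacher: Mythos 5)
Your proposal is correct and follows essentially the same route as the paper: rescaling along an arbitrary sequence $t_n\to T^*$, invoking the refined compactness Lemma 4.1, using the critical-mass hypothesis to upgrade weak convergence to strong $L^2$ (then $L^{2p_2+2}$, then $H^s$ via saturation of the sharp Gagliardo--Nirenberg inequality and weak lower semicontinuity), and concluding by the variational characterization of the ground state. Your explicit remarks on the uniqueness of the Gagliardo--Nirenberg optimizers and on removing the subsequence dependence only make precise points the paper leaves implicit.
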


\begin{proof}

We use the notations of the proof of Theorem 4.2. Assume that
$\|u_0\|_{L^2}=\|Q\|_{L^2}$. Recall that we have verified that
$\|V\|_{L^2}\geq \|Q\|_{L^2}$ in the proof of Theorem 4.2. Whence
\[
\|Q\|_{L^2}\leq\|V\|_{L^2}\leq  \liminf_{n\rightarrow \infty }\|v_n\|_{L^2}=\liminf_{n\rightarrow \infty }\|u(t_n)\|_{L^2}=\|u_0\|_{L^2}=\|Q\|_{L^2},
\]
and then,
\begin{equation}\label{49'}
\lim_{n\rightarrow \infty }\|v_n\|_{L^2}=\|V\|_{L^2}=\|Q\|_{L^2},
\end{equation}
which implies
\[
v_n(\cdot+x_n)\rightarrow V~ strongly ~in ~L^2~as~n\rightarrow \infty.
\]
We infer from the inequality \eqref{gn} that
\[
\|v_n(\cdot+x_{n})-V\|^{2p_2+2}_{L^{2p_2+2}}\leq C\|v_n(\cdot+x_{n})-V\|^{p_2}_{L^2}\|(-\Delta)^{s/2} (v_n(\cdot+x_{n})
-V)\|^2_{L^2}.
\]
From $\|(-\Delta)^{s/2} v_n(\cdot+x_{n})\|_{L^2}\leq C$, we get
\[
v_n(\cdot+x_{n})\rightarrow V~~in ~L^{2p_2+2}~as~n\rightarrow \infty.
\]

Next, we will prove that $v_n(\cdot+x_{n})$ converges to $V$ strongly in $H^s$. For this aim, we estimate as follows:
\begin{align}\label{410}
0 =&\lim_{n\rightarrow \infty }H(v_n)
\nonumber\\
  =&\frac{1}{2}\int_{\mathbb{R}^N}|(-\Delta)^{s/2} Q(x)|^2dx-\frac{1}{2p_2+2}\lim_{n\rightarrow \infty}\int_{\mathbb{R}^N} |v_n(x)|^{2p_2+2}dx\nonumber\\
  =&\frac{1}{2}\int_{\mathbb{R}^N}|(-\Delta)^{s/2} Q(x)|^2dx-\frac{1}{2p_2+2}\int_{\mathbb{R}^N}|V(x)|^{2p_2+2}dx.
\end{align}
Using the inequality \eqref{gn}, we infer that
\begin{align}\label{411h}
&\frac{1}{2}\int_{\mathbb{R}^N}|(-\Delta)^{s/2} Q(x)|^2dx=\frac{1}{2p_2+2}\int_{\mathbb{R}^N}|V(x)|^{2p_2+2}dx
\nonumber\\&\leq\frac{1}{2}\frac{\| V\|_{L^2}^{p_2}}{\|Q\|_{L^2}^{p_2}}\|(-\Delta)^{s/2} V\|_{L^2}^2=\frac{1}{2}\|(-\Delta)^{s/2} V\|_{L^2}^2.
\end{align}
On the other hand, we deduce from \eqref{44} that $ \|(-\Delta)^{s/2} V\|_{L^2}\leq \liminf_{n\rightarrow \infty}\|(-\Delta)^{s/2} v_n(\cdot+x_{n})\|_{L^2}=\|(-\Delta)^{s/2} Q\|_{L^2}$. Hence, we have $\| Q\|_{H^s}=\|V\|_{H^s}$ and
\begin{equation}\label{411}
v_n(\cdot+x_{n})\rightarrow V~ strongly ~in ~H^s~as~n\rightarrow \infty.
\end{equation}
This and \eqref{410} imply that
\[
H(V)=\frac{1}{2}\int_{\mathbb{R}^N}|(-\Delta)^{s/2} V(x)|^2dx-\frac{1}{2p_2+2}\int_{\mathbb{R}^N}|V(x)|^{2p_2+2}dx=0.
\]
Up to now, we have verified that
\[
\|V\|_{L^2}=\|Q\|_{L^2},~\|(-\Delta)^{s/2} V\|_{L^2}=\|(-\Delta)^{s/2} Q\|_{L^2}~ and ~H(V)=0.
\]
The variational characterization of the ground state implies that there exist $x_0\in  \mathbb{R}^N$ and $\theta\in [0,2\pi)$ such that
\[
V(x)=e^{i\theta}Q(x+x_0),
\]
and
\[
\rho^{N/2}_nu(t_n,\rho_n(\cdot+x_n))\rightarrow e^{i\theta}Q(\cdot+x_0)~strongly~in~H^s~as~n\rightarrow \infty.
\]
Since the sequence $\{t_n\}_{n=1}^\infty$ is arbitrary, we infer that there are two functions $x(t)\in \mathbb{R}^N$ and $\theta(t)\in [0,2\pi)$ such that
\[
\rho^{N/2}(t)e^{i\theta(t)}u(t,\rho(t)(x+x(t)))\rightarrow Q~strongly~in~H^s~as~t\rightarrow T^*.
\]
\end{proof}

\end{document}